\documentclass[11pt]{article}% Math and Physical Sciences Reference Style
\usepackage{color}
\usepackage{amsmath}

\usepackage{extarrows}
\usepackage{geometry}
\usepackage{authblk} % \affil
\usepackage{hyperref} % ref in PDF
\usepackage{color}
\usepackage{ntheorem}

\usepackage[mathscr]{euscript}
\usepackage[utf8]{inputenc}
\oddsidemargin -0.04cm \evensidemargin -0.04cm \textheight 23.5cm
\textwidth 17cm \topmargin 0.45cm \headheight 0 cm \headsep 0 cm
\parskip 0.1 cm

\def\q{\hfill\rule{1ex}{1ex}}
\def\0{\emptyset}

\def\q{\hfill\rule{1ex}{1ex}}

\newtheorem{theorem}{Theorem}[section]

\newtheorem{lemma}[theorem]{Lemma}

\newtheorem{cor}[theorem]{Corollary}

\newtheorem{conjecture}[theorem]{Conjecture}
\newenvironment{reproof}{{\noindent\it Proof of Theorem \ref{thm:egde-proper graph}.}}{\hfill $\square$\par}
\newenvironment{reprooff}{{\noindent\it Proof of Theorem \ref{thm:stronger result}.}}{\hfill $\square$\par}
\newenvironment{proof}{{\noindent\it Proof.}}{\hfill $\square$\par}

% \makeatletter\newenvironment{proof}{{\noindent\it Proof.}}{\hfill $\square$\par}
% \newcommand{\rmnum}[1]{\romannumeral #1}
% \newcommand{\Rmnum}[1]{\expandafter\@slowromancap\romannumeral #1@}
% \makeatother

%  --- figure package ---
\usepackage{graphicx}

% --- bold text packages ---
% \usepackage{bm}

% --- pseudo code packages ---
% \usepackage{algorithm}
% \usepackage{algorithmicx}
% \usepackage{algpseudocode}

% --- enumerate packages ---
\usepackage{enumerate}
\usepackage{enumitem}

% --- math packages ---
\usepackage{
	amsmath,			% Math Environments
	amssymb,			% Extended Symbols
	enumerate,		    % Enumerate Environments
	graphicx,			% Include Images
	lastpage,			% R\part{title}eference Lastpage
	multicol,			% Use Multi-columns
	multirow,			% Use Multi-rows
	pifont,			    % For Checkmarks
	% stmaryrd			% For brackets
}

%  ---- bibtex packages---
% \usepackage{cite}
\usepackage[numbers]{natbib}

\newcommand{\lf}{\left\lfloor}
\newcommand{\rf}{\right\rfloor}
\newcommand{\lc}{\left\lceil}
\newcommand{\rc}{\right\rceil}

\newcounter{cases}
\newcounter{subcases}[cases]
\newenvironment{mycase}
{
    \setcounter{cases}{0}
    \setcounter{subcases}{0}
    \newcommand{\case}
    {
        \par\indent\stepcounter{cases}\textbf{Case \thecases.}
    }
    
}
{
    \par
}
\renewcommand*\thecases{\arabic{cases}}

% ------------------------

% ----------- HERE BEGINS DOCUMENT ------

\begin{document}

% --- PAPER INFO ---

\title{On the minimum size of an edge-pancyclic graph of a given order}
\author[a]{Xiamiao Zhao \thanks{zxm23@mails.tsinghua.edu.cn}}
\author[b]{Yuxuan Yang \thanks{Correspondence Author. yangyx@bupt.edu.cn}}
%\author[a]{Mei Lu
%\thanks{ lumei@mail.tsinghua.edu.cn}}

\affil[a]{ Department of Mathematical Sciences, Tsinghua University, Beijing 100084, China}
\affil[b]{School of Mathematical Sciences, Beijing University of Posts and Telecommunications, Beijing 100876, China}

%\author{Yichen Wang}
\date{}

\maketitle\baselineskip 16.3pt

\begin{abstract}
A graph $G$ of order $n$ is called edge-pancyclic if, for every integer $k$ with $3 \leq k \leq n$, every edge of $G$ lies in a cycle of length $k$. Determining the minimum size $f(n)$ of a simple edge-pancyclic graph with $n$ vertices seems difficult. Recently, Li, Liu and Zhan gave both a lower bound and an upper bound on $f(n)$. In this paper, we improve their lower bound by considering a new class of graphs and improve the upper bound by constructing a family of edge-pancyclic graphs.
\end{abstract}

%\textbf{AMS classification: }\textit{05C75, 05C65, 05C05}\vskip 0.3cm

{\bf Keywords: Edge-pancyclic graph; minimum size; cycle; Hamilton cycle}
\vskip.3cm

\section{Introduction}
In this paper, all the graphs are simple. Let $C_k$ denote the cycle with $k$ vertices. Let $G=(V,E)$ be a graph, where $E(G)$ is the edge set and  $V(G)$ is the vertex  set.
We call a vertex with degree $d$ as $d$-degree vertex.  Let $e(G)=|E(G)|$ and $v(G)=|V(G)|$. For every vertex $v\in V(G)$, $N(v)$ is the neighborhood of $v$. Then the degree of $v$ is $d(v)=|N(v)|$ and the minimum degree of $G$ is $\delta(G)=\min\{d(v):v\in V(G)\}$.

A graph $G$ with $n$ vertices is called \textbf{pancyclic} if, for every integer $k$  with $3 \leq k \leq n$, $G$ contains a copy of  $C_k$. There are two stronger conditions which derive two interesting families of graphs.

A graph $G$ with $n$ vertices is \textbf{edge-pancyclic} (resp. vertex-pancyclic) if, for every $3\leq k\leq n$, each edge (resp. vertex) of $G$ lies in a cycle of length $k$.
Every nonempty edge-pancyclic graph is vertex-pancyclic. There is much work around these two classes of graphs \cite{broersma1997note,hendry1990extending,hobbs1976square,jesso2010hamiltonicity,randerath2002vertex,lei2022extended}.

Let $g(n)$ denote the minimum size of a vertex-pancyclic graph with $n$ vertices. Broersma \cite{broersma1997note} proved that when  $n\geq 7$, $3n/2 < g(n) \leq 5n/3$. There is no widely accepted conjecture on the precise value of $g(n)$.

Recently, Li, Liu and Zhan \cite{li2024minimum} proposed the following problem:

\textbf{Problem 1.} Determine the minimum size of an edge-pancyclic graph of order $n$.

They denoted the minimum size of an edge-pancyclic graph as $f(n)$ and gave the following result.
\begin{theorem}[\cite{li2024minimum}]\label{thm: the lower bound now}
    When $n\geq 4$,
    $$\lc\frac{3n}{2}\rc\leq f(n)\leq 2n-2.$$
\end{theorem}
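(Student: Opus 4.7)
The plan is to prove the two bounds separately, and I expect the main work to lie in the lower bound. For $f(n)\geq \lceil 3n/2\rceil$, the key claim is that any edge-pancyclic graph $G$ on $n\geq 4$ vertices satisfies $\delta(G)\geq 3$, which immediately gives $e(G)\geq 3n/2$ and hence the bound by integrality. Suppose for contradiction that some $v\in V(G)$ has $d(v)=2$, say $N(v)=\{u,w\}$. Since the edge $uv$ lies in a triangle (edge-pancyclicity with $k=3$), the endpoints $u,v$ share a common neighbor, which can only be $w$; thus $uw\in E(G)$. Now apply edge-pancyclicity with $k=n$: there is a Hamilton cycle $H$ through the edge $uw$. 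But $d(v)=2$ forces $H$ to use both $uv$ and $vw$; combined with $uw\in E(H)$, the cycle $H$ contains all three edges of the triangle on $\{u,v,w\}$. Since $v$ lies strictly between $u$ and $w$ on $H$, the vertices $u$ and $w$ cannot be consecutive on $H$, contradicting that $uw$ is an edge of $H$ (note this uses $n\geq 4$, so $H$ is not itself a triangle). This contradiction yields $\delta(G)\geq 3$.

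For the upper bound $f(n)\leq 2n-2$, the natural candidate is the wheel $W_{n-1}$ on $n$ vertices: a hub $h$ adjacent to all $n-1$ rim vertices $v_0,v_1,\dots,v_{n-2}$ arranged in a cycle. This graph has $(n-1)+(n-1)=2n-2$ edges. To verify edge-pancyclicity I would treat the two edge types separately, with all rim indices taken modulo $n-1$. For a spoke $hv_i$ and any $k$ with $3\leq k\leq n$, the sequence $h,v_i,v_{i+1},\dots,v_{i+k-2},h$ is a cycle of length $k$ containing $hv_i$. For a rim edge $v_iv_{i+1}$ and any $3\leq k\leq n$, the sequence $v_i,v_{i+1},\dots,v_{i+k-2},h,v_i$ is a cycle of length $k$ containing $v_iv_{i+1}$. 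The side-condition $k-2\leq n-2$ ensures that the rim vertex $v_{i+k-2}$ is distinct from $v_i$, so both constructions are valid.

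I expect the only delicate point to be the $\delta\geq 3$ argument, whose subtlety is that one must combine the smallest ($k=3$) and largest ($k=n$) cycle-length constraints at a single putative degree-$2$ vertex in order to extract a contradiction; neither constraint alone is enough. Once this is in hand, the lower bound is immediate by edge counting, and the upper bound follows from the entirely routine verification above that the wheel is edge-pancyclic.
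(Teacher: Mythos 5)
Your proof is correct. Note first that the paper does not prove this theorem itself --- it is cited from Li, Liu and Zhan, and the paper only remarks that the lower bound follows because edge-pancyclic implies vertex-pancyclic together with Broersma's bound $g(n) > 3n/2$ (which Broersma established for $n \geq 7$). Your route is genuinely different and, in fact, cleaner: you prove directly that $\delta(G) \geq 3$ for any edge-pancyclic $G$ on $n \geq 4$ vertices, which gives $e(G) \geq 3n/2$ by handshake, and this covers the small cases $4 \leq n \leq 6$ without appealing to an external result. Your $\delta \geq 3$ argument is essentially the content of Lemma~2.1 of the paper (stated there for $k$-edge-proper graphs), though the finishing step differs cosmetically: the paper observes that $\{x,y\}$ is a vertex cut so $xy$ cannot lie in a Hamilton cycle, whereas you observe that a Hamilton cycle on $n \geq 4$ vertices cannot contain all three edges of a triangle. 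Both are correct. Your verification that the wheel is edge-pancyclic, giving the upper bound $2n-2$, is the same construction the paper uses implicitly (via its Theorem~1.2 on 3-connected edge-pancyclic graphs), and your index bookkeeping checks out: for a spoke $hv_i$ the cycle $h,v_i,\dots,v_{i+k-2},h$ and for a rim edge $v_iv_{i+1}$ the cycle $v_i,\dots,v_{i+k-2},h,v_i$ each have length $k$, and the constraint $k-1 \leq n-1$ keeps the listed rim vertices distinct. In short, your proof is a correct and self-contained alternative to the citation; the one substantive advantage of your lower-bound argument over the paper's remark is that it handles the full stated range $n \geq 4$ directly.
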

The lower bound in Theorem \ref{thm: the lower bound now} can be directly obtained from the lower bound on $g(n)$, since every edge-pancyclic graph is vertex-pancyclic.
If $G$ is 3-connected, the exact size of a minimum edge-pancyclic graph was determined.
\begin{theorem}[\cite{li2024minimum}]
    Let $G$ be a 3-connected edge-pancyclic graph with $n$ vertices. Then $e(G)\geq 2n-2$ and equality holds if and only if $G=W_n$.
\end{theorem}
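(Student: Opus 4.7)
My plan is to reduce the theorem to a single structural claim: any 3-connected edge-pancyclic graph $G$ on $n$ vertices with $e(G)\le 2n-2$ must contain a universal vertex (one of degree $n-1$). Granted this, a short degree count yields the bound, and 3-connectivity pins the equality case to $W_n$.

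For the reduction, assume $u\in V(G)$ has $d(u)=n-1$ and set $H=G-u$. Since $\delta(G)\geq 3$ and every vertex of $H$ loses exactly one edge (to $u$), $\delta(H)\geq 2$, so $e(H)\geq n-1$ and $e(G)=(n-1)+e(H)\geq 2n-2$. Equality forces $H$ to be $2$-regular, i.e.\ a disjoint union of cycles; a second cycle in $H$ would make $\{u,a\}$ a 2-cut for any $a$ in one of the components, contradicting 3-connectivity, so $H$ is a single $(n-1)$-cycle and $G=W_n$. That $W_n$ itself satisfies the hypotheses is direct: for $3\le k\le n$, a cycle of length $k$ through a rim edge $v_iv_{i+1}$ is $v_iv_{i+1}v_{i+2}\cdots v_{i+k-2}uv_i$, and one through a spoke $uv_i$ is $uv_iv_{i+1}\cdots v_{i+k-2}u$.

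The main work is producing the universal vertex. Suppose, for contradiction, that $\Delta(G)\leq n-2$ and $e(G)\leq 2n-2$, so the total excess degree $\sum_v(d(v)-3)=2e(G)-3n$ is at most $n-4$. For every degree-$3$ vertex $v$ with $N(v)=\{a,b,c\}$, the $C_3$-requirement on $va,vb,vc$ forces at least two edges inside $\{a,b,c\}$, leading to two local types: (A) $\{v\}\cup N(v)$ induces $K_4$, or (B) exactly two such edges share a unique \emph{apex} $a\in N(v)$ adjacent to the other two neighbors of $v$. In type (B), the $C_4$-requirement through the edge $va$ forces $a$ to have a further neighbor outside $\{v,b,c\}$, so $d(a)\geq 4$. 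The plan is then to use the $n-4$ excess-degree budget to show that all the apexes coming from the (at least four) degree-$3$ vertices must coincide at a single vertex $u$: two distinct apexes, each carrying at least one unit of excess plus its own forced external neighbor, should exhaust the budget. Once all apexes are collapsed to a common $u$, the estimate $2e(G)\geq d(u)+3t+4(n-1-t)$, where $t$ is the number of degree-$3$ vertices and each such vertex is adjacent to $u$, combined with $e(G)\leq 2n-2$, forces $d(u)=t$ and all remaining vertices to have degree exactly $4$; any such remaining vertex would then be isolated from $\{u\}\cup S$ (where $S$ is the set of degree-$3$ vertices), so by 3-connectivity no such vertex exists, giving $t=n-1$ and $d(u)=n-1$, returning to the previous paragraph. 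The most delicate point I expect is handling type-(A) vertices: their local $K_4$ already supplies triangles and $4$-cycles for free, so one has to use the $C_{n-1}$- and $C_n$-conditions together with 3-connectivity to pin down how the $K_4$ attaches to the rest of $G$ and thereby extract a comparable apex structure.
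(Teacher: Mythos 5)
The paper does not prove this result; it is quoted verbatim from Li, Liu and Zhan (\cite{li2024minimum}) with no argument supplied, so there is no internal proof to compare your proposal against. I will therefore assess your proposal on its own terms.

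The easy half is fine: granting a universal vertex $u$, your count $e(G)=(n-1)+e(G-u)\ge (n-1)+(n-1)$, the identification of the equality case via $2$-regularity of $G-u$ and $3$-connectivity, and the verification that $W_n$ is edge-pancyclic are all correct. The local case analysis for a degree-$3$ vertex $v$ (either $N[v]\cong K_4$, or a unique apex $a$ adjacent to the other two neighbors with $d(a)\ge 4$ forced by the $C_4$ through $va$) is also sound, and once you \emph{know} that every degree-$3$ vertex is of type (B) with the \emph{same} apex $u$, the closing argument (all neighbors of $u$ lie in $S$, hence $N(s)\subseteq\{u\}\cup S$ for every $s\in S$, hence $R=V\setminus(\{u\}\cup S)$ is cut off from $\{u\}\cup S$, contradicting connectivity unless $R=\emptyset$) does go through.

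The real gap is exactly the step you flag yourself: establishing that such a common apex exists. The "excess-degree budget'' heuristic is not an argument. Two distinct apexes $a_1\ne a_2$ only force $d(a_1),d(a_2)\ge 4$, i.e.\ a total excess of $2$; the companion vertices $x_i$ coming from the $C_4$'s have no forced excess at all (they merely have degree $\ge 3$). Since the budget $2e(G)-3n\le n-4$ grows linearly in $n$, nothing here prevents a spread of distinct apexes, and you give no mechanism that would. Worse, the type-(A) case ($N[v]\cong K_4$) is left entirely open, yet it is not a degenerate corner: $W_5$ itself, an extremal graph for the theorem, has \emph{every} degree-$3$ vertex of type (A). So any correct proof along these lines must genuinely handle type (A), not just "extract a comparable apex structure.'' In addition, your final inequality $2e(G)\ge d(u)+3t+4(n-1-t)$ already presupposes that every degree-$3$ vertex is adjacent to $u$, which is precisely what needs to be proved. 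As written, this is a plan with a hole in its center, not a proof.
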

Here $W_n$ is the wheel graph with $n$ vertices, which is constructed by joining an extra vertex with every vertex in a copy of cycle $C_{k-1}$.

In this paper, we shall show that when $n$ is larger, the lower bound on $f(n)$ in Theorem \ref{thm: the lower bound now} can be improved to $\lc\frac{7n}{4}\rc$.
 Let $k\geq 3$. A graph $G$ is called \textbf{$k$-edge-proper} if each edge in $G$  lies in a $C_i$ for every $3\leq i\leq k$ and also lies in a Hamilton cycle. Clearly, for any $n\geq k\geq 3$, every edge-pancyclic graph of order of $n$ is $k$-edge-proper.

 We have the following results.
\begin{theorem}\label{thm:egde-proper graph}
    Let $G$ be a 3-edge-proper graph of order $n\geq 6$. Then $e(G)\geq \lc\frac{5n}{3}\rc$.
\end{theorem}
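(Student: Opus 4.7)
The strategy is to show that every $3$-edge-proper graph on $n \geq 6$ vertices has average degree at least $10/3$, which yields $e(G) \geq \lceil 5n/3 \rceil$ by the degree-sum formula. I will first establish $\delta(G) \geq 3$ using the triangle and Hamilton conditions on edges incident to a low-degree vertex, then prove two structural observations about the neighborhood of an arbitrary degree-$3$ vertex, and finally close the bound by a short discharging argument.

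\textbf{Minimum degree and neighborhood structure.} Hamiltonicity gives $\delta(G) \geq 2$. For $d(u) = 2$ with $N(u) = \{v, w\}$, the triangle on $uv$ forces $vw \in E$; any Hamilton cycle through $uv$ pins $u$'s two cycle-neighbors to $v, w$, and then the Hamilton cycle through $vw$ is forced to be the triangle $uvw$, contradicting $n \geq 6$, so $\delta(G) \geq 3$. Now fix a degree-$3$ vertex $u$ with $N(u) = \{v_1, v_2, v_3\}$; the triangle conditions on the $uv_i$ force the induced subgraph on $\{v_1, v_2, v_3\}$ to have at least two edges, leaving four configurations: the $K_4$ case, or one of three ``path'' configurations where some $v_j$ is adjacent to the other two while the remaining pair is non-adjacent. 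I will prove:
\emph{(A)} no degree-$3$ vertex has all three neighbors of degree $3$;
\emph{(B)} if $u$ has exactly two degree-$3$ neighbors $v_1, v_2$, then $d(v_3) \geq 5$.
For (A) in the $K_4$ configuration the four vertices form a component of size $4 < n$. In each path configuration, applying the triangle condition to the external edge at a non-middle degree-$3$ neighbor forces the middle vertex to acquire a fourth neighbor, contradicting its assumed degree. For (B), the $K_4$ configuration and two path configurations are ruled out as in (A); the only surviving configuration is $v_3 \sim v_1,\, v_3 \sim v_2,\, v_1 \not\sim v_2$. Letting $x, y$ be the third neighbors of $v_1, v_2$, the triangles on $v_1 x$ and $v_2 y$ force $v_3 \sim x$ and $v_3 \sim y$. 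If $x = y$, then $\{u, v_1, v_2, v_3\}$ connects to the rest of $G$ only through $x$, so $x$ is a cut vertex; any Hamilton cycle through an edge inside $\{u, v_1, v_2, v_3\}$ would induce a spanning path of the disconnected $G - x$, impossible for $n \geq 6$. Hence $x \neq y$ and $d(v_3) \geq 5$.

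\textbf{Discharging.} Assign each vertex $v$ the initial charge $\mu(v) = d(v)$, so $\sum_v \mu(v) = 2 e(G)$. Apply two rules: every degree-$4$ vertex sends $1/6$ to each degree-$3$ neighbor, and every vertex of degree at least $5$ sends $1/3$ to each degree-$3$ neighbor. A degree-$4$ vertex retains at least $4 - 4 \cdot (1/6) = 10/3$ (tight when all four neighbors have degree $3$); a degree-$d$ vertex with $d \geq 5$ retains at least $d - d/3 = 2d/3 \geq 10/3$. For a degree-$3$ vertex $u$, let $k_3, k_4, k_5$ count its neighbors of degree $3, 4, \geq 5$ respectively; by (A) we have $k_3 \leq 2$, and by (B) whenever $k_3 = 2$ we have $k_5 \geq 1$. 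A short check yields $k_4 + 2 k_5 \geq 2$ in every admissible case, so $u$ receives at least $k_4/6 + k_5/3 \geq 1/3$, and its final charge is at least $10/3$. Summing, $2 e(G) \geq 10n/3$, hence $e(G) \geq \lceil 5n/3 \rceil$.

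\textbf{Main obstacle.} The crux is Observation (B), and in particular the $x = y$ subcase, where the contradiction is not local but requires invoking Hamiltonicity via a cut-vertex argument. Tracking the various incidence possibilities (coincidences among $x$, $y$, and the $v_i$'s) throughout the case analyses in (A) and (B) is where most of the care is required.
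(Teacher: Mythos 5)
Your proof is correct, and it takes a genuinely different route from the paper's. The paper proceeds by induction on $n$: its Claims~1 and~2 show that if the degree-$3$ vertex $v$ has any degree-$3$ neighbor, one can contract a small subgraph (or edge) to obtain a smaller $3$-edge-proper graph and invoke the induction hypothesis; only in the remaining case, where $V_3$ is an independent set, does the paper run a (then very simple) discharging argument that actually yields $e(G)\geq 12n/7$, strictly better than $5n/3$. You instead avoid induction entirely: your observations (A) and (B) are purely local structural facts valid in every $3$-edge-proper graph — a degree-$3$ vertex has at most two degree-$3$ neighbors, and if it has exactly two then the third neighbor has degree at least $5$ — and the discharging rules (rates $1/6$ from degree-$4$ vertices, $1/3$ from degree-$\geq 5$ vertices) are tuned to be tight on the extremal graphs $A_{3k}$, delivering exactly $10n/3$ in the degree sum. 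Your approach buys a self-contained, non-inductive argument (no need to verify base cases or check that contraction preserves $3$-edge-properness), at the cost of a somewhat more delicate case analysis inside (A) and (B); the paper's induction offloads the hard cases to smaller graphs and gets a cleaner discharging but a stronger structural assumption. One small phrasing issue in your (B): when $x=y$ you should first note that if $d(v_3)\geq 5$ you are already done, and only in the residual case $d(v_3)=4$ does $N(v_3)=\{u,v_1,v_2,x\}$ force $x$ to be a cut vertex — as written, the sentence ``$\{u,v_1,v_2,v_3\}$ connects to the rest of $G$ only through $x$'' is not justified without fixing $d(v_3)=4$ first. This is easily repaired and does not affect the correctness of the proof.
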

The bound is tight by the following construction.

Let $k\geq 2$. We will define a graph $A_{3k}$ with $3k$ vertices, a graph $B_{3k+1}$ with $3k+1$ vertices and two graphs $D_{3k+2}^1$ and $D_{3k+2}^2$ with $3k+2$ vertices as follows.

Let $C_{2k}=v_1v_2\dots v_{2k}v_1$ be a cycle with $2k$ vertices. The graph $A_{3k}$ is obtained from $C_{2k}$ by adding $k$ new vertices $\{u_1,u_2,\dots,u_k\}$ and adding  edges $u_iv_{2i-1}$, $u_iv_{2i}$ and $u_iv_{2i+1}$ for $i=1,\dots, k$ where $v_{2k+1}=v_1$.

Let $C_{2k+1}=v_1v_2\dots v_{2k+1}v_1$ be a cycle with $2k+1$ vertices. The graph $B_{3k+1}$ is obtained from $C_{2k+1}$ by adding $k$ new vertices $\{u_1,u_2,\dots,u_k\}$ and adding  edges $u_iv_{2i-1}$, $u_iv_{2i}$ and $u_iv_{2i+1}$ for $i=1,\dots, k$ and one edge $v_{2k+1}v_2$.

Let $C_{2k+2}=v_1v_2\dots v_{2k+2}v_1$ be a cycle with $2k+2$ vertices. The graph $D_{3k+2}^1$ is obtained from $C_{2k+1}$ by adding $k$ new vertices $\{u_1,u_2,\dots,u_k\}$ and adding edges $u_iv_{2i-1}$, $u_iv_{2i}$ and $u_iv_{2i+1}$ for $i=1,\dots, k$ and two edges $v_{2k+1}v_1$, $v_{2k+2}v_2$. The graph $D_{3k+2}^2$ is obtained from $D_{3k+2}^1$ by deleting the edge $v_{2k+1}v_1$ and adding the edge $v_{2k+2}v_{2k}$.

%Consider graphs consist of cycle $C_{2k+1}=v_1,\dots,v_{2k+1}$ and $k$ vertices $u_1,\dots,v_k$ all connect three consecutive vertices in $C_{2k+1}$ such that $N(u_i)\cup N(u_j)=\emptyset$ for any $i\neq j$. Then there exist two consecutive vertices $(v_i,v_{i+1})$ in $C_{2k+1}$ (we set $v_{2k+2}=v_1$) with degree $3$. Then collection graphs obtained by adding one edge $v_iv_{i+2}$ or an edge $v_{i-1}v_{i+1}$ to the above graphs, we get a family of graphs $\mathscr{B}_{3k+1}$.Consider graphs consist of cycle $C_{2k+2}=v_1,\dots,v_{2k+2}$ and $k$ vertices $u_1,\dots,v_k$ all connect three consecutive vertices in $C_{2k+2}$ such that $N(u_i)\cup N(u_j)=\emptyset$ for any $i\neq j$. Then there exist two pairs of consecutive vertices $(v_i,v_{i+1})$ and $(v_j,v_{j+1})$ in $C_{2k+2}$ (we set $v_{2k+3}=v_1$, and it may have $i=j+1$ or $j=i+1$) with degree $3$. Then collection graphs obtained by adding arbitrary two edges accident to $\{v_i,v_{i+1},v_j,v_{j+1}\}$ in the above graphs, such that every edge lies in a triangle, we get a family of graphs $\mathscr{D}_{3k+2}$.

Figure \ref{fig:eABD} shows the graphs $A_6$, $B_7$, $D_8^1$ and $D_8^2$.
\begin{figure}
    \centering
    \includegraphics[width=0.9\linewidth]{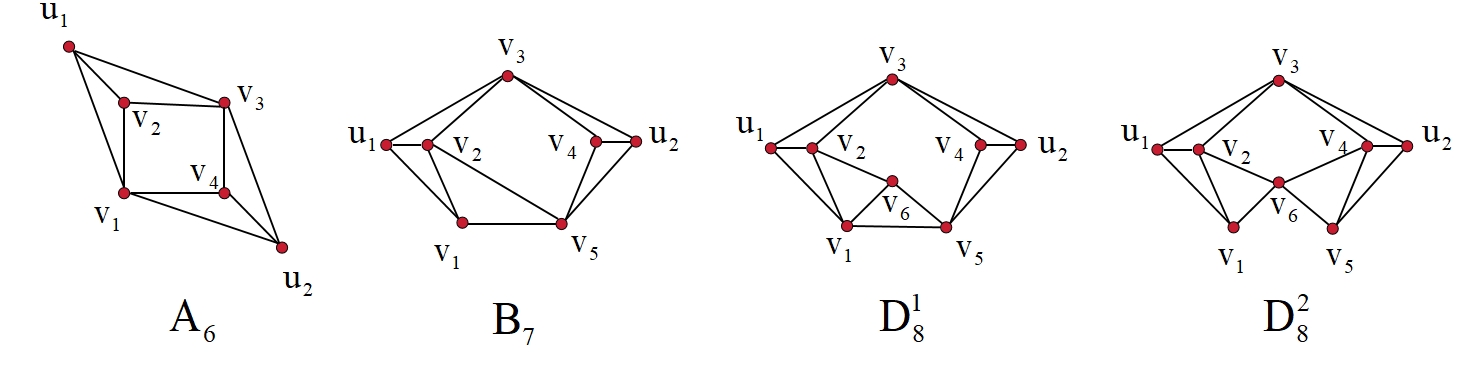}
    \caption{The extremal graphs.}
    \label{fig:eABD}
\end{figure}

When $\lc \frac{n}{3}\rc=k$, the graphs  $A_{3k},B_{3k+1},D_{3k+2}^1, D_{3k+2}^2$ are all $3$-edge-proper graphs with $\lc\frac{5n}{3}\rc$ edges. Since the graphs $D_{3k+2}^1$ and $D_{3k+2}^2$ are both extremal graphs, we declare that there exist many $3$-edge proper graphs that achieve this bound.
Actually, We can also give a lower bound of the minimum size of a $4$-edge proper graph..
\begin{theorem}\label{thm:stronger result}
    Let $G$ be a 4-edge-proper graph of order $n\geq 7$. Then $e(G)\geq \lc\frac{7n}{4}\rc$.
\end{theorem}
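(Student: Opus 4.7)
The plan is to strengthen the argument of Theorem~\ref{thm:egde-proper graph} by exploiting the stronger $C_4$ hypothesis: the added condition should push the minimum degree up to $3$ and severely restrict how degree-$3$ vertices can cluster in a $4$-edge-proper graph.

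First I would prove that $\delta(G) \geq 3$. Suppose, toward a contradiction, that some vertex $v$ has degree $2$ with $N(v) = \{u, w\}$. The triangle condition applied to the edge $uv$ forces a common neighbour of $u$ and $v$; since $w$ is the only candidate, $uw$ must be an edge of $G$. But every Hamilton cycle of $G$ must traverse both of the only two edges $uv$ and $vw$ incident to $v$, so no Hamilton cycle can use $uw$, contradicting the $4$-edge-proper hypothesis. Notably, this step only uses the triangle and Hamilton conditions; the $C_4$ hypothesis does the essential work in the next step.

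Next I would carry out a local structural analysis of degree-$3$ vertices. For $v$ of degree $3$ with $N(v) = \{a, b, c\}$, the triangle condition on each edge incident to $v$ forces $G[\{a, b, c\}]$ to contain at least two of the edges $ab, bc, ca$. Combining this with the $C_4$ condition on the three edges at $v$ and with the Hamilton-cycle condition, I would establish quantitative restrictions of the form: every degree-$3$ vertex has at least one neighbour of degree $\geq 4$; if a degree-$3$ vertex has two degree-$3$ neighbours then the third must in fact have degree $\geq 5$; and configurations in which $\{v, a, b, c\}$ would induce a closed $K_4$-like piece are ruled out because $G$ is connected of order $n \geq 7$. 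These local restrictions are designed precisely so that the deficit $\tfrac{1}{2}$ carried by each degree-$3$ vertex can be absorbed by neighbours of higher degree.

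The conclusion follows from a discharging argument. Assign each vertex $v$ initial charge $d(v) - \tfrac{7}{2}$, and let every vertex of degree $d \geq 4$ distribute its surplus $d - \tfrac{7}{2}$ among its degree-$3$ neighbours according to a rule tuned to the local structure (uniform distribution, or weighted by the number of degree-$3$ neighbours). The restrictions above ensure that each degree-$3$ vertex receives at least $\tfrac{1}{2}$ and that no donor ends with negative charge. Summing, $\sum_v (d(v) - \tfrac{7}{2}) \geq 0$, which gives $2 e(G) \geq \tfrac{7n}{2}$, so $e(G) \geq \lc \frac{7n}{4} \rc$ after the integrality adjustment.

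The main obstacle is the structural step for degree-$3$ vertices. The triangle and $C_4$ conditions alone are satisfied by dense pieces such as $K_4$, so they do not by themselves exclude clusters of degree-$3$ vertices; the Hamilton-cycle hypothesis must therefore do the real work. Translating "every edge lies in a Hamilton cycle" into a sharp local restriction will likely require following the forced edges at several degree-$3$ vertices simultaneously, extracting a forced subpath through them, and deriving a parity or connectivity contradiction when that subpath cannot be closed into a Hamilton cycle avoiding a designated chord.
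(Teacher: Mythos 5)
Your plan is in the same family as the paper's (discharging on vertex degrees with structural claims about degree-$3$ vertices), but it omits the one ingredient the paper actually leans on: induction on $n$. In the paper, Claims~4, 5, and~6 are all "we may assume" claims -- they are \emph{not} proved as local impossibilities. Instead, when a configuration violating the claim appears, the paper contracts or deletes a few vertices and invokes the induction hypothesis on the smaller $4$-edge-proper graph. The discharging at the end is only applied after all such reducible configurations have been discharged into the inductive case, and it yields $e(G)\geq 83n/46$, which is \emph{strictly larger} than $\lceil 7n/4\rceil$. The slack is crucial: the extremal constructions (the $C_t$-of-fans graphs that achieve $e(G)=7n/4$ exactly) contain configurations that violate the paper's claims, and they are handled by the inductive reduction, not by the discharging.

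Your proposal instead aims at a pure discharging with zero slack, targeting $e(G)\geq 7n/4$ directly. This is where the gap is. First, the three structural assertions you list are not enough: you would also need tight upper bounds on the number of degree-$3$ neighbors that a degree-$4$ (and degree-$5$) vertex can have, and more generally a case analysis showing the donors never go negative; you sketch this only as "a rule tuned to the local structure." Second, at least one of your assertions is weaker than the paper's corresponding claim and lets in configurations the paper eliminates inductively. You assert "if a degree-$3$ vertex has two degree-$3$ neighbours then the third has degree $\geq 5$"; the paper's Claim~5 instead says (modulo induction) that a degree-$3$ vertex never has two degree-$3$ neighbours. Your weaker version does hold locally, but then your discharging has to handle the extra configuration $v,y,z\in V_3$ with $d(x)\geq 5$, and whether this can be absorbed depends on a chain of further local facts about the $C_4$ and Hamiltonicity constraints that you have not established. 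Because the bound you are discharging toward is exactly tight, every such unverified case is a potential counterexample to the charge-balance. The proposal is therefore not a complete proof; to finish along your line you would either need to carry out the full case analysis that makes a slack-free discharging work, or add the inductive step as the paper does -- in which case you essentially reproduce the paper's argument.
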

Notice that if a graph with $n\geq k$ vertices is edge-pancyclic, it is $k$-edge-proper. The above theorem implies the following result.
\begin{cor}
    When $n\geq 8$, we have
    $$\lc\frac{7n}{4}\rc\leq f(n)\leq 2n-2.$$
\end{cor}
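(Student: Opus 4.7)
I plan to prove the bound by a discharging argument that extends the strategy behind Theorem~\ref{thm:egde-proper graph}. Assign each vertex $v$ the initial charge $\mathrm{ch}(v)=d(v)-\tfrac{7}{2}$, so that $\sum_v\mathrm{ch}(v)=2e(G)-\tfrac{7}{2}n$, and it suffices to redistribute charge so that every vertex ends non-negative. The vertices with negative initial charge are exactly those of degree $2$ (deficit $\tfrac32$) and degree $3$ (deficit $\tfrac12$); every vertex of degree at least $4$ carries surplus at least $\tfrac12$.

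The new ingredient, compared with Theorem~\ref{thm:egde-proper graph}, is the "every edge lies in a $C_4$" hypothesis, which tightens the local structure at low-degree vertices. If $v$ has degree $2$ with $N(v)=\{u,w\}$, then $uw\in E(G)$ (triangle through $uv$) and $u,w$ share a further common neighbor $z\notin\{u,v,w\}$ (from the $C_4$ through $uv$); the Hamilton cycle through $uw$, which cannot use the edges $uv$ or $vw$, then locates additional external structure in the second neighborhood of $v$. If $v$ has degree $3$ with $N(v)=\{u_1,u_2,u_3\}$, the triangle condition forces $N(v)$ to induce either a $P_3$ or a $K_3$. In the $P_3$ case, the $C_4$ through the edge from $v$ to the middle vertex of the path forces that middle vertex to have a neighbor outside $N[v]$; in the $K_3$ case, the Hamilton condition forces one of $u_1,u_2,u_3$ to have a neighbor outside the $K_4$ on $\{v,u_1,u_2,u_3\}$. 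Either way, every degree-$3$ vertex has a neighbor of degree at least $4$.

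These structural lemmas justify the discharging rules: each degree-$2$ vertex pulls $\tfrac32$ from its two incident edges, rerouting through the witness $z$ when needed, and each degree-$3$ vertex pulls $\tfrac12$ from its guaranteed degree-$\ge 4$ neighbor. What remains is a local verification at each donor of degree $d\ge 4$, bounding the number of low-degree neighbors that can call on it by a further analysis of the edges it lies on, each of which must itself belong to a $C_3$, a $C_4$, and a Hamilton cycle.

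The main obstacle I foresee is the donor-overload problem: a degree-$4$ vertex carries surplus only $\tfrac12$, yet could in principle sit in the $P_3$-witness position for several distinct degree-$3$ vertices. Ruling out such overload requires a careful second round of discharging, likely using the Hamilton cycles through the degree-$3$ vertices to show that two degree-$3$ vertices sharing the same degree-$4$ middle-witness must force additional surplus elsewhere. A secondary, bookkeeping obstacle is the set of small orders $7\le n\le 10$, where the asymptotic discharging is blind to $O(1)$ deficits; these cases will be treated by direct inspection, which also verifies that the $3$-edge-proper extremal families $A_{3k},B_{3k+1},D_{3k+2}^1,D_{3k+2}^2$ from Theorem~\ref{thm:egde-proper graph} are not $4$-edge-proper.
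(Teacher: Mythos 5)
The corollary in the paper is a one-line deduction: an edge-pancyclic graph on $n\ge 8$ vertices is in particular $4$-edge-proper, so the lower bound $\lceil 7n/4\rceil$ is exactly Theorem~\ref{thm:stronger result}, and the upper bound $2n-2$ is Theorem~\ref{thm: the lower bound now} quoted from Li, Liu, Zhan. Your proposal instead attempts to re-derive the lower bound from scratch by a single-pass discharging with initial charge $d(v)-7/2$, and it does not address the upper bound at all. That omission alone means the corollary is not proved.

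On the lower-bound side there are two concrete gaps. First, the degree-$2$ case is misanalyzed: if $d(v)=2$ and $N(v)=\{u,w\}$, then as you note $uw\in E(G)$, but then $\{u,w\}$ is a $2$-cut separating $v$ from the rest, so there is \emph{no} Hamilton cycle through $uw$ — a contradiction, forcing $\delta(G)\ge 3$ (this is exactly Lemma~\ref{pro}). Your text instead asserts that the Hamilton cycle through $uw$ ``locates additional external structure,'' which is incorrect; no such Hamilton cycle exists, and the proper conclusion is that the degree-$2$ rule is vacuous. Second, and more seriously, you correctly identify the donor-overload problem for degree-$4$ vertices (surplus only $1/2$) but leave it unresolved, describing it as requiring ``a careful second round of discharging'' without specifying the rules or verifying they close. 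The paper's proof of Theorem~\ref{thm:stronger result} does \emph{not} work by discharging alone: it first runs an induction with several contraction reductions (Claims~3--6) that eliminate precisely the overload configurations your plan worries about, and only then applies a discharging argument with a three-way classification $A_1,A_2,A_3$ of degree-$3$ vertices tuned to give $83/46 > 7/4$ per vertex. Without that induction step (or an equivalent mechanism), the pure discharging you propose is not known to close, and you have given no argument that it does. As written, the proposal is a sketch of a hoped-for proof of Theorem~\ref{thm:stronger result}, not a proof of the corollary.
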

We will show Theorem \ref{thm:stronger result} is asymptotically optimal.
The fan graph $F_k$ is obtained from $P_{k-1}=v_1v_2\dots v_{k-1}$ by adding a new vertex $u$ and adding the edges $uv_i$ for $i=1,2,\dots, k-1$.
When $n=4t$ for some integer $t\geq 3$, let $G$ be the graph obtained from $C_t=w_1w_2\dots w_tw_1$ by replacing every edge $w_iw_{i+1}$ with a copy of $F_5$ with $w_i=v_1$ and $w_{i+1}=v_{4}$. Because $G$ is symmetric, it is easy to check $G$ is $4$-edge-proper and $e(G)=7t=7v(G)/4.$

From the above result, we have the following natural generalization.
\begin{conjecture}
    Let $k\geq 4$ and $n$  large enough. If a graph $G$ of order $n$  is $k$-edge-proper, then $e(G)\geq \lc (4k-9)n/(2k-4)\rc$.
\end{conjecture}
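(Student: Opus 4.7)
The plan is to extend the discharging/structural machinery used for Theorem \ref{thm:stronger result} to general $k\ge 4$, guided by the extremal construction: a cycle $C_t$ in which every edge has been replaced by a copy of the fan $F_{2k-3}$, with the two endpoints of the path in $F_{2k-3}$ identified with consecutive cycle vertices. A short calculation shows this graph has $n=t(2k-4)$ vertices and $t(4k-9)$ edges, realizing the conjectured ratio, and one checks directly that it is $k$-edge-proper for every $k\ge 4$. Writing $\alpha=(4k-9)/(k-2)=4-1/(k-2)$, the conjecture is equivalent to showing that the average degree of $G$ is at least $\alpha$.

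The first step would be to catalogue the local constraints imposed by low-degree vertices. If $\deg(v)=2$ with $N(v)=\{a,b\}$, the $C_3$-condition on $va$ forces $ab\in E(G)$, and for each $4\le\ell\le k$ the $C_\ell$-condition on $va$ translates into an $a$-to-$b$ path of length $\ell-1$ in $G-v$. If $\deg(v)=3$ with $N(v)=\{a,b,c\}$, the triangle conditions on the three incident edges force at least two of $ab,ac,bc$ to be in $E(G)$; iterating the $C_\ell$-conditions constrains the neighborhood further, producing a short list of admissible ``fan-slice'' patterns in which one of $a,b,c$ acts as a local hub of degree at least $4$.

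Second, I would run a discharging argument with initial charge $d(v)-\alpha$ at each vertex, so that the total charge equals $2e-\alpha n$, which we want to be nonnegative. Degree-$3$ vertices carry deficit $(k-3)/(k-2)$, degree-$4$ vertices carry surplus $1/(k-2)$, and a hub of degree $d$ carries surplus $d-\alpha$. Following the extremal example, each low-degree vertex should pull charge from a nearby hub along a triangle, at a rate calibrated so that a hub of degree $2k-4$ can support exactly $2k-6$ degree-$3$ neighbours; a degree-$2$ vertex is amortised against both of its forced-adjacent neighbours, each of which the long-cycle conditions will push to have large degree.

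The main obstacle will be the structural classification that underwrites the discharging rules, specifically ruling out configurations in which many low-degree vertices cluster without an accompanying hub. Here the long-cycle requirements ($C_\ell$ for $\ell$ close to $k$) together with the Hamilton-cycle condition should provide long-range obstructions forcing any near-extremal local arrangement to look like a fan gadget. The ``$n$ large enough'' clause presumably exists to accommodate small sporadic examples that have to be dealt with separately. An alternative route is induction on $n$: identify a removable fan-like substructure, delete or contract it to obtain a smaller $k$-edge-proper graph, and appeal to the induction hypothesis; proving the existence of such a substructure is itself a structural theorem essentially equivalent to the discharging analysis above, so the two strategies stand or fall together.
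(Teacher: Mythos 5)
The statement you are attempting is a \emph{Conjecture} in the paper: the authors do not prove it. They only show it is asymptotically tight by exhibiting the fan-chain construction (the same one you describe), and the conjecture is left open. So there is no proof of the paper's to compare yours against.

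Your submission is also not a proof — it is a strategy memo. Every load-bearing step is deferred: the ``catalogue of local constraints'' is not produced, the discharging rules are never fully specified or shown to balance, and you explicitly flag the structural classification ruling out clustered low-degree vertices as ``the main obstacle'' without resolving it. Those are precisely the parts that made the $k=4$ case (Theorem \ref{thm:stronger result}) substantive, and they become markedly harder as $k$ grows because the number of admissible local patterns around a $3$-vertex explodes and the $C_\ell$-conditions for $\ell$ near $k$ interact with vertices farther and farther away. The alternative inductive route you float has the same unproved structural theorem at its core, as you note. One smaller point: the degree-$2$ analysis in your first step is moot — Lemma \ref{pro} already shows a $k$-edge-proper graph has $\delta\geq 3$, since a degree-$2$ vertex forces its two neighbours to be adjacent, making them a cut pair whose connecting edge cannot lie on a Hamilton cycle. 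In short, the approach you outline is consistent with the methods the paper uses for $k=3,4$ and is a reasonable program, but it does not constitute a proof, and the paper itself offers none; the conjecture remains open.
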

A similar construction shows this conjecture is asymptotically optimal. When $n=t(2k-4)$ for some integer $t\geq 3$, let $G$ be obtained from $C_t=w_1w_2\dots w_tw_1$ by replacing every edge $w_iw_{i+1}$ with a copy of $F_{2k-3}$ with $w_i=v_1$ and $w_{i+1}=v_{2k-4}$. Because $G$ is symmetric, it is easy to check $G$ is $k$-edge-proper and $e(G)=(4k-9)t=(4k-9)n/(2k-4).$
The conjecture above implies the following conjecture.
\begin{conjecture}
    If $G$ is edge-pancyclic with $n$ vertices and $n$ is large enough, then
    $$e(G)\geq 2n-o(n).$$
\end{conjecture}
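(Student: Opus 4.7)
The plan is to derive this conjecture as an immediate consequence of the preceding conjecture, by letting the cycle-length parameter $k$ tend to infinity slowly with $n$. The starting point is the obvious fact that every edge-pancyclic graph of order $n$ is $k$-edge-proper for every integer $k$ with $3\le k\le n$, so one is free to apply the preceding conjecture at whichever value of $k$ one wishes, provided $n$ is large enough relative to that $k$.

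The key computation is the arithmetic identity
\[
\frac{4k-9}{2k-4}\;=\;2-\frac{1}{2k-4},
\]
which rewrites the lower bound in the preceding conjecture as $e(G)\ge 2n - n/(2k-4)$ for any $k$-edge-proper $G$ of order $n$. Given an arbitrary $\varepsilon>0$, I would fix an integer $k=k(\varepsilon)$ satisfying $1/(2k-4)<\varepsilon$ (any $k>2+1/(2\varepsilon)$ works), and then invoke the preceding conjecture at this fixed $k$ to obtain
\[
e(G)\;\ge\;\lc\frac{(4k-9)n}{2k-4}\rc\;\ge\;2n-\frac{n}{2k-4}\;>\;(2-\varepsilon)n
\]
for every edge-pancyclic $G$ of order $n\ge n_0(k)$, where $n_0(k)$ is the threshold supplied by that conjecture. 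Since $\varepsilon>0$ is arbitrary, this is precisely the statement $e(G)=2n-o(n)$.

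The main obstacle is of course the preceding conjecture itself, which is open and presumably requires structural or inductive input substantially stronger than the discharging used for Theorem \ref{thm:stronger result}; in particular one must capture the idea that, as the required cycle lengths grow, the graph must become progressively closer in density to the fan-type extremal construction. Once that conjecture is granted, however, the reduction to edge-pancyclic graphs is painless: the only point that needs attention is the order of quantifiers (first fix $k=k(\varepsilon)$, then take $n$ large), which causes no difficulty for an $o(n)$-type conclusion. If one had further explicit control on how $n_0(k)$ grows with $k$, then one could even let $k=k(n)\to\infty$ along with $n$ (say $k(n)=\lfloor\log n\rfloor$) and obtain the quantitative bound $e(G)\ge 2n - n/(2k(n)-4)$ directly; but for the conjecture as stated this refinement is unnecessary.
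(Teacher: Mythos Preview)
Your proposal is correct and matches the paper's own reasoning: the paper does not prove this conjecture but simply asserts that it follows from the preceding conjecture on $k$-edge-proper graphs, and your derivation via the identity $(4k-9)/(2k-4)=2-1/(2k-4)$ together with the freedom to choose $k=k(\varepsilon)$ is exactly the intended implication. You also correctly flag that the preceding conjecture is the genuine open obstacle.
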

In \cite{li2024minimum},  Li, Liu and Zhan gave an upper bound on the minimum size of an edge-pancyclic graph with $n$ vertices.
\begin{theorem}[\cite{li2024minimum}]\label{1}
    Given any integer $k\geq 3$, let $n=6k^2-5k$. Then there exists an edge-pancyclic graph with $n$ vertices and size $2n-k$.
\end{theorem}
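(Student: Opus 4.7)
The plan is to construct the required graph $G$ explicitly. Since the wheel $W_n$ is already edge-pancyclic with $2n-2$ edges, the task is to shave off $k-2$ edges while preserving the property that every edge lies in a cycle of each length $3\le\ell\le n$. The natural way to do this is to replace the single hub of the wheel by $k$ \emph{distributed hubs} $u_1,\dots,u_k$, each attached to a consecutive arc of a large outer cycle rather than to all rim vertices. Concretely, take an outer cycle $C=v_1v_2\cdots v_{n-k}v_1$ of length $n-k=6k(k-1)$ and partition its vertex set cyclically into $k$ arcs $A_1,\dots,A_k$ of size $6k-5$, with consecutive arcs sharing exactly one endpoint. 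Joining each $u_i$ to every vertex of $A_i$ contributes $k(6k-5)=n$ spoke edges; together with the $n-k$ rim edges this gives exactly $2n-k$ edges on $n$ vertices, matching the target.

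Edge-pancyclicity can then be verified as follows. Triangles are immediate: any rim edge $v_jv_{j+1}$ lies in some arc $A_i$, yielding the triangle $u_iv_jv_{j+1}$, and any spoke $u_iv_j$ lies in a triangle $u_iv_jv_{j\pm 1}$ because at least one of $v_{j\pm 1}$ is also in $A_i$. A Hamilton cycle through any prescribed edge is obtained by walking around $C$ and ``dipping'' into each hub $u_i$ exactly once; since each arc has $6k-5\ge 13$ vertices, the dip point in each arc can be chosen to avoid the endpoints of a prescribed rim edge or to include a prescribed spoke. For an intermediate length $\ell$, a cycle through a given edge is built as a partial Hamilton tour that bypasses a chosen stretch of the rim by jumping through a hub along two of its spokes: bypassing the $t$ rim vertices of $A_i$ strictly between two chosen spokes of $u_i$ shortens the walk by $t$, which gives the flexibility to tune the cycle length in small, predictable increments.

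The main obstacle will be the bookkeeping: realizing $C_\ell$ through \emph{every} edge for \emph{every} $3\le \ell\le n$, especially (a) rim edges sitting at the boundary between two arcs, where it matters which of the two covering hubs is used, and (b) lengths near the transition values $6k-5,\,2(6k-5),\dots$ where the cycle must commit to an integer number of fully traversed arcs. I expect that a case analysis on $\ell$ modulo $6k-5$, combined with a short library of length-adjusting moves that insert or remove a single hub together with the appropriate number of rim vertices, will complete the verification.
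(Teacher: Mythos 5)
Your vertex and edge counts are right: $k$ fans $F_{6k-4}$ glued cyclically at their spine endpoints give $n=6k^2-5k$ vertices and $2n-k$ edges, and this fan-cycle gadget is exactly what the paper uses in Section~1 to produce \emph{$k$-edge-proper} graphs. But that is all it gives; the graph you describe is not edge-pancyclic, so the construction does not prove the theorem.

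The failure is at spokes incident to the \emph{interior} of an arc. Take $k=3$, so $n=39$, the rim has $36$ vertices, and $A_1=\{v_1,\dots,v_{13}\}$ with hub $u_1$. Any cycle through the spoke $u_1v_7$ uses a second spoke $u_1v_j$ with $v_j\in A_1$ together with a $v_7$--$v_j$ path in $G-u_1$. In $G-u_1$ the interior vertices $v_2,\dots,v_{12}$ have only their two rim neighbours, so such a path either stays on the rim segment of $A_1$ (length at most $6$), or leaves $A_1$, which costs $6$ rim steps just to reach $v_1$ or $v_{13}$ and then at least $4$ more ($v_1,u_3,v_{25},u_2,v_{13}$ is the shortest return) to re-enter $A_1$ at the other end. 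Hence every cycle through $u_1v_7$ has length at most $8$ or at least $12$, and lengths $9,10,11$ are unrealizable. More generally, if the foot of a spoke is at distance $m$ from the nearer endpoint of its arc, cycles confined to $A_i\cup\{u_i\}$ have length at most $6k-4-m$, while any cycle that leaves the arc has length at least $m+2(k-1)+2=m+2k$; for $m\geq 2k-1$ (the middle $\Theta(k)$ vertices of every arc) these ranges miss, leaving a gap around $[6k-3-m,\,m+2k-1]$. Your bypass moves cannot bridge this, because the $m$ rim edges from $v_c$ out to the arc boundary cannot be shortened without reusing $u_i$.

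So the obstacle is not bookkeeping but structural: hubs that each see only a contiguous \emph{path} of rim vertices leave interior spokes with no medium-length cycles. The construction in Li--Liu--Zhan must differ in an essential way (e.g.\ giving each hub access to a short cycle rather than a path, or adding a few well-placed chords), and a case analysis on $\ell\bmod(6k-5)$ will not rescue the fan-cycle gadget as stated.
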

By Theorem \ref{1}, an upper bound on $f(n)$  is $2n-\Theta(n^{\frac{1}{2}})$.
In this paper, we improve this upper bound to $2n-\Theta(\frac{n}{\ln n})$.
\begin{theorem}\label{thm: upper bound} Let $s\ge e^8$ be an integer and $\ell=\lc\frac{s}{\ln s}\rc$.
    When $n=(100s-1)s^\ell$,  there exists an edge-pancyclic graph $G$ with $n$ vertices and $e(G)\leq 2n-\frac{n}{200\ln n}.$
\end{theorem}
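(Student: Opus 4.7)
The plan is to construct $G$ via a recursive block-replacement procedure. First, fix a base edge-pancyclic graph $H$ on $100s-1$ vertices whose size is roughly $2(100s-1)-\Theta(s)$; such an $H$ can be obtained along the lines of Theorem \ref{1} (with its parameter of order $\sqrt{s}$) or by a direct ad hoc construction. Then define a blow-up operation $G\mapsto G\ast\Gamma$ which replaces every vertex $v\in V(G)$ with a gadget $\Gamma_v$ on $s$ vertices having two distinguished ports $u_v^-, u_v^+$, a Hamilton $u_v^-$-$u_v^+$ path, and $u_v^-$-$u_v^+$ paths of every intermediate length; a fan-type graph, possibly with a few extra chords, works, with $e(\Gamma)=2s-c_1$ for a constant $c_1\geq 1$. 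Each edge of $G$ is rerouted through the appropriate ports of the two incident gadgets. Iterating this operation $\ell=\lc s/\ln s\rc$ times starting from $H$ produces a graph $G_\ell$ on exactly $n=(100s-1)s^\ell$ vertices; that is our candidate $G$.

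Edge-counting is the easy part. Each blow-up adds one gadget per vertex of the current graph, with each gadget saving at least $c_1$ edges relative to the $2s$-per-vertex baseline. Summing the resulting geometric series $c_1\sum_{i=0}^{\ell-1} m_i$ with $m_i=(100s-1)s^i$ gives a total edge saving of at least $c_1 n/(s-1)$. Since $\ln n = \ell\ln s + \ln(100s-1) = s + O(\ln s)$, this saving exceeds $n/(200\ln n)$ once $s\geq e^8$, which explains the hypothesis (the numerical constants $100$ and $\tfrac{1}{200}$ in the statement are there to match $c_1$ and the base saving coming from $H$). The substantive work is to verify that each blow-up preserves edge-pancyclicity, so that $G_\ell$ is edge-pancyclic by induction on $\ell$. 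Fix an edge $e$ of $G_{i+1}$ and a target length $k$ with $3\leq k\leq v(G_{i+1})$: if $e$ lies inside some $\Gamma_v$, then for $k\leq s$ we realize $C_k$ entirely inside $\Gamma_v$ through $e$, while for larger $k$ we write $k=qs+r$, pick a cycle of length roughly $q$ in $G_i$ through $v$ (available by inductive edge-pancyclicity), and inflate it by inserting Hamilton port-to-port paths through the $q-1$ intermediate gadgets together with an $r$-length path through $\Gamma_v$ using $e$. Cross-gadget edges are handled analogously, with the slack $r$ split between the two incident gadgets via their variable-length port-to-port paths.

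The main obstacle will be designing $\Gamma$ so that it simultaneously admits port-to-port paths of \emph{every} intermediate length and short cycles through every internal edge -- essentially a ``doubly pancyclic'' condition with marked boundary vertices. Fan-type graphs augmented by a constant number of chords satisfy both properties, with the added chords absorbed into $c_1$. A secondary technical obstacle is the handling of boundary target lengths: very small $k\in\{3,4,5\}$ (where one needs short cycles through every edge of $G_\ell$, including cross-gadget ones) and near-Hamiltonian $k$ (where the inflated length must hit the total exactly). These cases are settled by a finer analysis of $\Gamma$ and of $H$, after which the edge-count computation above finishes the proof.
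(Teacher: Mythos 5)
Your construction takes a genuinely different route from the paper's. The paper builds a single cycle $C$ of length $s^\ell$, equips it with chord edges at $\ell-1$ geometrically spaced scales (a \emph{flat} multi-scale structure on one cycle), and then performs a \emph{single, non-iterated} replacement: each cycle edge becomes a copy of a fan-pair gadget $H(s)$ on $100s$ vertices, and each chord becomes a bundle of four edges between the corresponding gadgets. The original cycle vertices survive as gadget endpoints, so the incidence structure of $C$ and its chords is preserved, and the multi-scale chords are what supply cycles of every intermediate length. Your proposal instead iterates a \emph{vertex} blowup $\ell$ times, replacing every vertex of the current graph with a two-port gadget on $s$ vertices. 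The edge-count arithmetic in your sketch is essentially correct (the geometric sum gives a saving of order $n/s\sim n/\ln n$), but the pancyclicity argument has a real gap.

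The gap is in the two-port vertex blowup at vertices of degree at least three, and it is not the ``secondary technical obstacle'' you defer to a finer analysis; it is the central one. Every edge-pancyclic graph has minimum degree at least $3$ (Lemma \ref{pro}), so at each vertex $v$ of $G_i$ the $d(v)\ge 3$ incident edges are split between the two ports $u_v^-,u_v^+$, and some pair of them necessarily lands on the \emph{same} port. For $G_{i+1}$ to contain a Hamilton cycle through a given edge, the lifted cycle must visit all $s$ vertices of every gadget $\Gamma_v$, hence must enter $\Gamma_v$ at one port and leave at the other. Projecting to $G_i$, this requires a Hamilton cycle of $G_i$ (through the relevant edge) whose two cycle-edges at \emph{every} vertex $v$ attach to opposite ports of $\Gamma_v$. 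Edge-pancyclicity of $G_i$ gives you \emph{some} Hamilton cycle through each edge, but no control over which pair of incident edges that cycle uses at the other $m_i-1$ vertices, so for a fixed port assignment there is no reason this ``port-alternation'' condition can be met for every target edge simultaneously. Improving the internal design of $\Gamma$ cannot fix this, since the obstruction is in the external attachment pattern, and giving $\Gamma$ more ports changes the edge count and the recursion entirely. The paper avoids the issue by replacing \emph{edges}, not vertices: an edge has exactly two endpoints, so the two attachment points of $H(s)$ are canonically determined and no port assignment is needed.
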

This paper is arranged as follows. In Section \ref{sec: section2}, we will prove Theorems \ref{thm:egde-proper graph} and  \ref{thm:stronger result}. In Section \ref{sec:section 3}, we will give a construction of an edge-pancyclic graph to prove Theorem \ref{thm: upper bound}.
\section{A lower bound on the size of a $k$-edge-proper graph}\label{sec: section2}
Let $G$ be a $k$-edge-proper graph of order $n$. First, we have a simple property of a $k$-edge-proper graph.

 \begin{lemma}\label{pro}
 Let $k\ge 3$ and $G$ be a $k$-edge-proper graph of order $n$. Then  $\delta(G)\ge 3$.
 \end{lemma}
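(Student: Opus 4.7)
The plan is to rule out low-degree vertices by combining the triangle condition with the Hamilton-cycle condition, and then obtain the edge bound from the handshake identity.

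For the first assertion, I would argue by contradiction: suppose some vertex $v$ has $d(v)\le 2$. Since $G$ has at least one edge, any Hamilton cycle $H$ guaranteed by the $k$-edge-proper property passes through every vertex of $G$, so every vertex has at least two $H$-neighbors in $G$; this immediately rules out $d(v)\le 1$. Now suppose $d(v)=2$ and write $N(v)=\{u,w\}$. Apply the triangle condition (using $k\ge 3$) to the edge $uv$: it lies on some $C_3$, say $uvx$, so $x\in N(v)=\{u,w\}$, which forces $x=w$ and hence $uw\in E(G)$. Next, pick a Hamilton cycle $H'$ containing the edge $uw$. In $H'$ each vertex has exactly two cycle-neighbors; since $N(v)=\{u,w\}$, the two $H'$-neighbors of $v$ must be $u$ and $w$, so both $uv$ and $vw$ lie in $H'$. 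Combined with $uw\in H'$, the vertex $u$ already has two $H'$-neighbors $v$ and $w$, and likewise $w$'s two $H'$-neighbors are $u$ and $v$. Thus $H'$ is the $3$-cycle $uvwu$, contradicting $n\ge 4$ (the regime in which the lemma will be applied in Theorems~\ref{thm:egde-proper graph} and~\ref{thm:stronger result}).

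For the "moreover" part, the handshake identity gives
\[
2e(G)=\sum_{x\in V(G)} d(x)\ge 4n,
\]
so $e(G)\ge 2n$, as claimed.

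The only nontrivial step is eliminating $d(v)=2$: a pure degree argument fails, and one has to use both the $C_3$-condition (to extract the edge $uw$) and the Hamilton-cycle condition (to collapse the Hamilton cycle through $uw$ to a triangle). Everything else is either a direct consequence of the Hamilton-cycle condition or a one-line degree sum.
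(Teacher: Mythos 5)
Your proof is correct and follows essentially the same route as the paper: use the $C_3$ condition on $uv$ to force $uw\in E(G)$, then use the Hamilton-cycle requirement on $uw$ to get a contradiction. The only cosmetic difference is the final step: the paper observes that $\{u,w\}$ is a vertex cut (so $uw$ lies on no Hamilton cycle), whereas you track the cycle-neighbors of $u,v,w$ directly to collapse $H'$ to a triangle; these are the same obstruction phrased two ways, and your explicit note that $n\ge 4$ is needed is a point the paper leaves implicit.
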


 \begin{proof} Since $G$ contains a Hamilton cycle, $\delta(G)\ge 2$.
 Suppose there exists a vertex $v\in V(G)$ with $d(v)=2$. Let $N(v)=\{x,y\}$. Since  $vx$ lies in a triangle, $xy\in E(G)$. Then $\{x,y\}$ is a vertex cut which implies  $xy$ does not lie in a Hamilton cycle, a contradiction.
 \end{proof}

 In the following discussion, by Lemma \ref{pro}, we let  $V_3=\{u\in V(G)|~d(u)=3\}$ and $V_{4^+}=V(G)\setminus V_3$.
Now we start our proof with the easier case, Theorem \ref{thm:egde-proper graph}.
\vskip.2cm
\begin{reproof} Let $G$ be a 3-edge-proper graph of order $n\ge 6$.
    We will prove by induction on  $n$. It is  trivial when $n\le 8$. Assume $n\geq 9$.
    By Lemma \ref{pro}, we can assume $\delta(G)= 3$.
    Let $v\in V_3$ and $N(v)=\{x,y,z\}$. Since $G$ is 3-edge-proper, without loss of generality, we can assume that $xy, xz\in E(G)$.

    \noindent{\bf Claim 1.}
        We may assume $x\in V_{ 4^+}$.

    \noindent{\bf Proof of Claim 1.}
        Suppose $x\in V_3$. Then we have $yz\not\in E(G)$; otherwise $\{y,z\}$ is a vertex cut which implies that $yz$ does not lie in a Hamilton cycle. Let $H$ be the graph obtained from $G$ by contracting the subgraph $G[\{v,x,y,z\}]$. Then it is easy to check that every edge of $H$ lies in a triangle. In  $G$, every edge that is not in $\{vx,vy,vz,xy,xz\}$ belongs to a Hamilton cycle that continuously passes through the edges $yx,xv,vz$ or $yv,vx,xz$. In both cases, the Hamilton cycle of $G$ will remain a Hamilton cycle of $H$ after contracting  $G[\{v,x,y,z\}]$. Thus $H$ is $3$-edge-proper. By induction hypothesis, we have $e(H)\geq \lc5(n-3)/3\rc$. Then $e(G)\geq e(H)+ 5\geq \lc5n/3\rc.$\q

     \noindent{\bf Claim 2.}
        We may assume $y,z\in V_{ 4^+}$.

     \noindent{\bf Proof of Claim 2.}
        Assume without loss of generality that $y\in V_3$. Then we have $yz\not\in E(G)$; otherwise $\{x,z\}$ is a vertex cut which implies that $xz$ is not in a Hamilton cycle. Then there is a $y_1\in V(G)\setminus\{v,x,y,z\}$ such that  $yy_1\in E(G)$. Since $y_1y$ lies in a triangle and $y,v\in V_3$, we have $y_1x\in E(G)$. Let $H$ be a graph obtained from $G$ by contracting the edge $yv$ as a vertex $v'$. It is easy to check every edge of $H$ lies in a triangle. Then we will prove that every edge of $H$ lies in a Hamilton cycle.
        \begin{figure}
        \centering
        \includegraphics[width=0.4\linewidth]{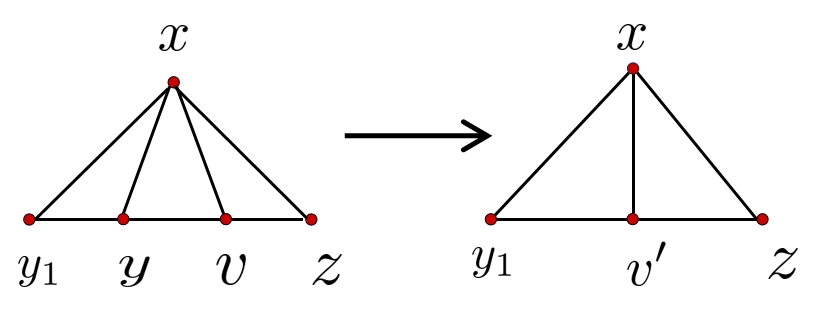}
        \caption{Contract the edge $yv$ as $v'$.}
        \label{fig:enter-label}
    \end{figure}
Let $C$ be a Hamilton cycle in $G$. If $yv\in E(C)$,  by replacing  $yv$ with $v'$, we have a Hamilton cycle in $H$. If $yv\notin E(C)$, then $C$ passes $y_1,y,x,v,z$ consecutively by $d(y)=d(v)=3$.
 Then we replace the segment $y_1yxvz$ in $C$ with $y_1v'xz$ and derive  a Hamilton cycle in $H$.
        Thus $H$ is $3$-edge-proper. By induction hypothesis, $e(G)\geq e(H)+2\geq \lc 5(n-1)/3\rc+2\geq \lc 5n/3\rc.$\q

      By Claims 1 and 2, we may assume $V_3$ is an independent set. Next, we will calculate the average degree of $G$. We define two functions $f_0,f:V(G)\to \mathbb{R}$. For every $u\in V(G)$, let $f_0(u)=d(u)$. For every vertex $v\in V_3$, we let $f(v)=24/7$, and for every vertex $u\in V_{ 4^+}$, we let $f(u)=6/7d(u)\geq 24/7.$ Let every vertex $u\in V_{4^+}$ give out $1/7$ value of $f_0$ to all of its $3$-degree neighborhood, thus its resulting value is at least $f_0(u)-1/7d(u)=d(u)-1/7d(u)=f(u)$. For any $u\in V_3$, $u$ will receive at least $1/7$ value from every one of its neighbourhoods, therefore its resulting value is $f_0(u)+3/7=3+3/7=24/7=f(u)$. Thus
     $$24/7n\leq  \sum_{w\in V(G)}f(w)\leq \sum_{w\in V(G)}f_0(w)=\sum_{w\in V(G)}d(w).$$
     So we have $e(G)\geq \lc 12/7n\rc > \lc 5n/3\rc$ when $n\geq 6$,
     and we finished the proof.
\end{reproof}

\vskip.3cm
We can have a slightly stronger result stated in Theorem \ref{thm:stronger result} with a more precise analysis.
%

%begin{reprooff}
%Suppose $G$ is an edge-pancyclic graph with $n$ vertices. Then we can similarly assume that $\delta(G)=3$. Let there exist a vertex $v$ such that $d(v)=3$ and $N(v)=\{x,y_0,z_0\}$. Then without loss of generality, we may assume that $xy_0,xz_0\in E(G)$. Then $d(v)\geq 4$. Because otherwise, if $d(x)=3$, then $y_0z_0\not\in E(G)$, and the edge $vx$ is not in any $C_4$.Next, if one of $\{y_0,z_0\}$ has degree $3$, say $d(y_0)=3$, then we have that $y_0z_0\not\in E(G)$, otherwise $xz_0$ not in any Hamilton cycle. So we can find another vertex $y_1$ connect $y_0$. Since $y_1y_0$ is contained in a $C_3$, we have $y_1x\in E(G)$.
%If $d(y_1)=3$, then similarly $y_1z\not\in E(G)$, and we can find another $y_2$ adjacent to both $y_1$ and $x$. Repeat this process until we find $y_a$, $a\geq 0$ such that $d(y_a)\geq 4$. Do the same process to $z_0$ until we find $z_b$, $b\geq 0$ with $d(z_b)\geq 4$.Then we prove that $d(x)\geq 4+a+b$, i.e. $N(x)\neq \{v,y_0,\dots,y_a,z_0,\dots,z_b\}$. Because otherwise, we have $y_az_b\not\in E(G)$, since $\{y_a,z_b\}$ is vertex cut. Then we have that the edge $vx$ is not contained in a cycle with length
%\end{reprooff}

\vskip.2cm

\begin{reprooff}
    Let $G$ be a 4-dege-proper graph with $n\ge 7$ vertices. We will prove by induction on $n$. When $n\le 8$, it can be trivially checked. Then we assume $n\geq 9$. By Lemma \ref{pro}, we can assume $\delta(G)= 3$.

   Let $v\in V_3$  and $N(v)=\{x,y,z\}$. Without loss of generality, we assume $xy,xz\in E(G)$. Then we have the following claims.

 \noindent{\bf Claim 3.}  $x\in V_{ 4^+}$.

 \noindent{\bf Proof of Claim 3.}
   Suppose $x\in V_3$. Then $yz\not\in E(G)$; otherwise $yz$ is not contained in any Hamilton cycle by $\{y,z\}$ being a vertex cut. Then the edge $vx$ is not in any $C_4$, a contradiction.\q

    \noindent{\bf Claim 4.}\label{claim: max at least 5}
        If $d(y)=4$, $d(z)= 4$ and $yz\not\in E(G)$, then we may assume  $d(x)\geq 5$

    \noindent{\bf Proof of Claim 4.}
        We prove this by contradiction. Suppose $d(x)=4$ by Claim 3. Since $vx$ lies in a $C_4$,  either $y,x$ or $x,z$ has a common neighbor. By symmetry, we assume $y,x$ has a common neighbor $y_1$. We will finish the proof by considering the common neighbors of $y$ and $z$.
        \begin{figure}[th]
        \centering
        \includegraphics[width=1\linewidth]{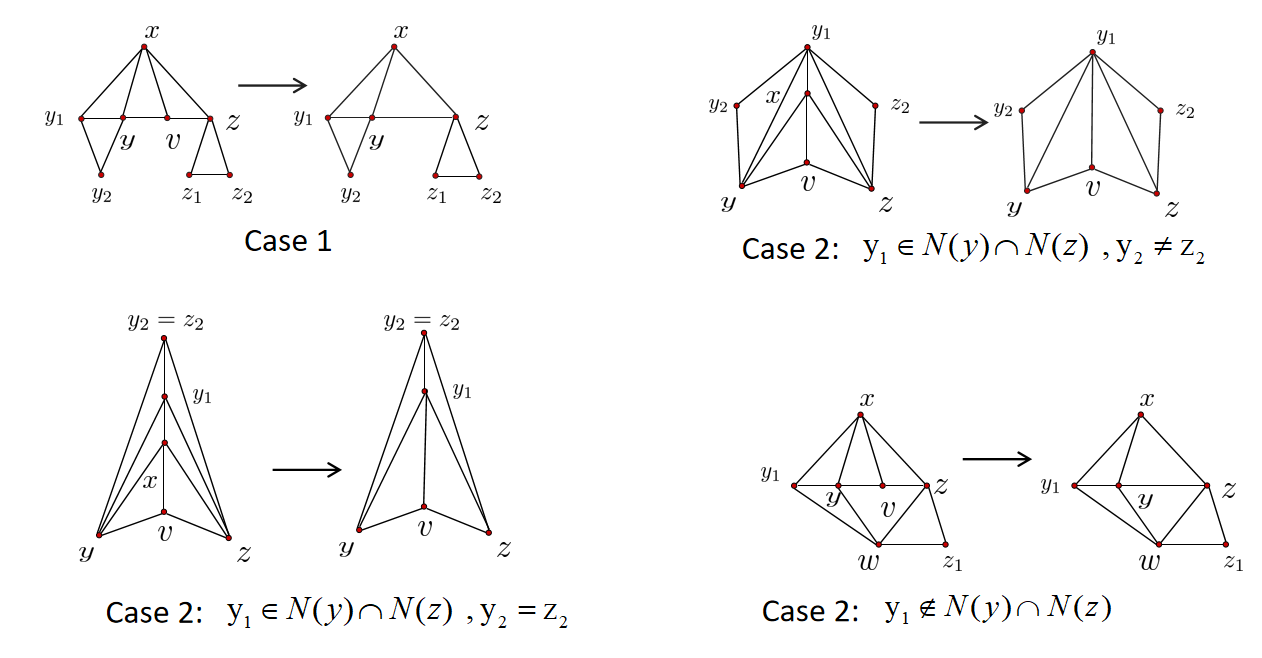}
        \caption{Two cases about  $N(y)\cap N(z)$.}
        \label{fig:enter-label}
    \end{figure}

        {\bf Case 1} $N(y)\cap N(z)=\{v,x\}$.

Let $N(y)\setminus\{v,x,y_1\}=\{y_2\}$ and $N(z)\setminus\{v,x\}=\{z_1,z_2\}$. Since $yy_2$ lies in a $C_3$ and $y_2x, y_2v\not\in E(G)$, we have $y_1y_2\in E(G)$. Since $z_1z$ lies in a $C_3$ and $z_1v,z_1x\not\in E(G)$, $z_1z_2\in E(G)$ (see Figure 3).

        Let $H$ be the graph obtained from $G$ by contracting $vy$ as $y$ (see Figure 3). Then we can check that $H$ is also $4$-edge-proper.
        Because for every edge contained in the same cycle of length $3$ or $4$ as $vy$, we can find another cycle that contains the edge in $H$. Thus, every edge is contained in cycles of length $3$ and $4$. And for every Hamilton cycle that contains $v$, if it contains $xv,vy$, then we can replace the two edges with $xy$ since $xy$ is not in the Hamilton cycle. Similarly, we can replace $xv,vz$ with $xz$ and replace $yv,vz$ with $yz$ in other Hamilton cycles. Thus, every edge is still contained in a Hamilton cycle, which implies $H$ is also a $4$-edge proper graph.
        Note that a routine verification above shows that $H$ is 4-edge-proper, which will be repeatedly applied. We therefore omit the details in subsequent occurrences.
        By induction hypothesis, we have $e(G)=e(H)+2\geq \lc 7n/4\rc.$

        {\bf Case 2} $|N(y)\cap N(z)|\ge 3$.

        We first consider the case $y_1\in N(y)\cap N(z)$.
        Since $d(y)=4$, there exists a vertex $y_2\in N(y)\setminus\{y_1,x,v\}$. Since $yy_2$ is contained in a $C_3$, and $y_2x,y_2v,yz\not\in E(G)$, we have $y_2y_1\in E(G)$. Similarly, there exists a vertex $z_2\in N(z)\cap N(y_1)$ (possible $y_2=z_2$) (see Figure 3). Let $H$ be the graph obtained by contracting the edge $y_1x$. With similar proof as above, $H$ is also $4$-edge-proper. By induction hypothesis, we have $e(G)\geq \lc 7n/4\rc$.

       Now we consider the case $y_1\not\in N(y)\cap N(z)$. Then there exists a vertex $w\in (N(y)\cap N(z))\setminus \{v,z,y_1\}$ which implies $N(y)=\{x,v,y_1,w\}$. Since $yw$ lies in a $C_3$, and $wx,wv,yz\not\in E(G)$, we have $y_1w\in E(G)$. Since $wz$ lies in $C_3$, there is $z_1\in (N(w)\cap N(z))\setminus\{v,x,y_1\}$ (see Figure 3).
        Let $H$ be obtained from $G$ by contacting $yv$ as $y$. Then with similar proof as above, $H$ is also $4$-edge-proper.
        By induction hypothesis, we have that $e(G)\geq \lc 7n/4\rc.$\q

    \vskip.2cm
    \noindent{\bf  Claim 5.} $|\{y,z\}\cap V_3|\le 1$.

     \noindent{\bf Proof of Claim 5.} Suppose $y,z\in V_3$. Then $yz\not\in E(G)$; otherwise $\{x,z\}$ is a vertex cut and the edge $xz$ will not be contained in any Hamilton cycle, a contradiction.
     Let $N(y)\setminus\{v,x\}=\{y_1\}$. Since $yy_1$ lies in a $C_3$ and $yz, y_1v\not\in E(G)$, $xy_1\in E(G)$. If $y_1z\in E(G)$, then $\{x,y_1\}$ is a vertex cut and the edge $xy_1$ will not be contained in any Hamilton cycle, a contradiction. Then there is $z_1\in N(z)\setminus\{v,x,y_1\}$. We can similarly check the graph $H$ obtained from $G$ by contracting $yv$ as $v'$ is a $4$-edge-proper graph of order $n-1$. By induction hypothesis, $e(G)\geq \lc 7(n-1)/4\rc+2\geq \lc7n/4\rc.$\q

     %By Claim 5, the maximum degree in $G[V_3]$ is at most 1.

    \noindent{\bf  Claim 6.}\label{claim: when d(y)=3}
        If $y\in V_3$ (resp. $z\in V_3$), then $z\in V_{ 4^+}$ (resp. $y\in V_{ 4^+}$) and we may assume
  $d(x)\geq 5$.

   \noindent{\bf Proof of Claim 6.} Suppose $d(y)=3$. By Claim 5, $z\in V_{ 4^+}$.
     Suppose  $d(x)=4$. By the same argument as the proof of Claim 5, we have $yz\not\in E(G)$
      and $y_1x\in E(G)$, where $N(y)\setminus\{v,x\}=\{y_1\}$. If $y_1z\in E(G)$, then $\{z,y_1\}$ is a vertex cut and the edge $zy_1$ will not be contained in any Hamilton cycle, a contradiction.   So $y_1\not\in N(z)$.
           By Claim 5,
      $d(y_1)\geq 4$. We will finish the proof by considering whether $y_1$ and $z$ have common neighbors.

         {\bf Case 1} $N(y_1)\cap N(z)\not=\emptyset$.

       Let $w\in N(y_1)\cap N(z)$. Since $y_1w$ and $wz$ are all contained in some $C_3$, there exists a vertex $u_1\in (N(y_1)\cap N(w))\setminus\{v,x,y,z\}$ and $u_2\in (N(z)\cap N(w))\setminus\{v,x,y,y_1\}$ (possible $u_1=u_2$).
        Let $H$ be the graph obtained from $G$ by deleting vertices $x,y,v$ and adding an edge $y_1z$. It is easy to check that $H$ is a $4$-edge-proper graph of order  $n-3$, and $e(H)=e(G)-6$. By induction hypothesis, $$e(G)=e(H)+6\geq \lc 7(n-3)/4\rc+6\geq \lc 7n/4\rc.$$

       {\bf Case 2} $N(y_1)\cap N(z)=\emptyset$.

         Let $H$ be obtained from $G$ by contracting the subgraph $G[\{y_1,x,y,v,z\}]$. Then it is easy to check that $H$ is $4$-edge-proper graph of order $n-4$ and  $e(H)=e(G)-7$. By induction hypothesis, we have
        $$e(G)\geq e(H)+7\geq \lc7(n-4)/4\rc+7\geq \lc 7n/4\rc.$$
    \q

By Claims 3, 4 and 6, we
    can classify all  vertices in $V_3$ into three classes. Let (see Figure 4)
   \begin{align}
    A_1=&\{v\in V_3|N(v)=\{x,y,z\},xy,xz,yz\in E(G),x,y,z\in V_{4+}\},\notag\\
    A_2=&\{v\in V_3|N(v)=\{x,y,z\},xy,xz\in E(G), yz\not\in E(G),\max\{d(x),d(y),d(z)\}\ge5,\notag
      x,y,z\in V_{4+}\},\notag\\
    A_3=&\{v\in V_3|N(v)=\{x,y,z\},xy,xz\in E(G),d(x)\geq 5,y\in V_3, z\in V_{4+} \mbox{~or~}z\in V_3, y\in V_{4+} \}.\notag
    \end{align}

    %\textbf{Class 1.} $N(v)=\{x,y,z\}$, $xy,xz,yz\in E(G)$, $d(x),d(y),d(z)\geq 4$.

   % \textbf{Class 2.} $N(v)=\{x,y,z\}$, $xy,xz\in E(G)$ but $yz\not\in E(G)$, and $$.

    %\textbf{Class 3.} $N(v)=\{x,y,z\},$ $d(x)\geq 5$, $d(y)=3$ (resp. $d(z)=3$) and $y$ (resp. $z$) adjacent only one $3$-degree vertex $v$.

   % Class 1 and Class 2 come from Claim \ref{claim: max at least 5}, and Class 3 comes from Calim \ref{claim: when d(y)=3}.
    \begin{figure}
        \centering
        \includegraphics[width=1\linewidth]{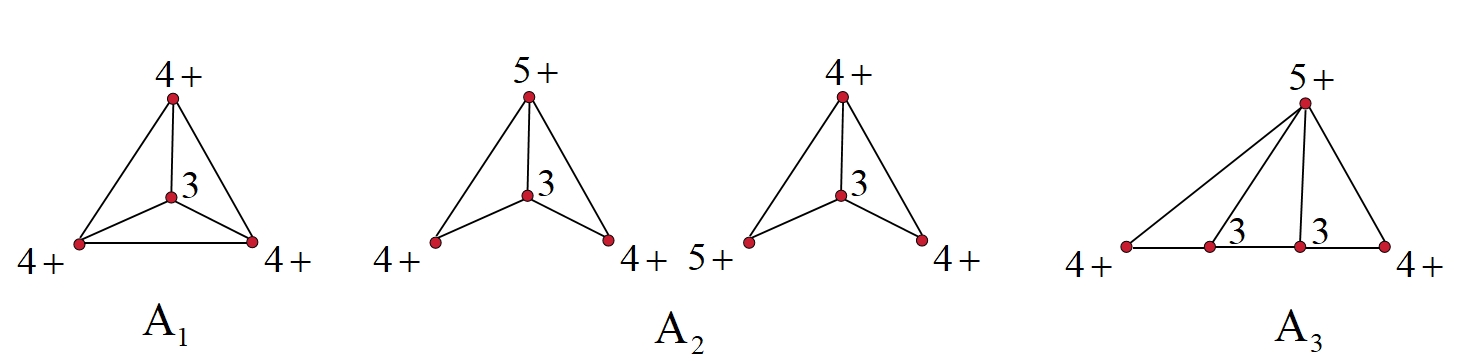}
        \caption{Three classes of $3$-degree vertices.}
        \label{fig:enter-label}
    \end{figure}

    Now we define a function $f_0: V(G)\to \mathbb{R}$ such that $f_0(u)=d(u)$ for all $u\in V(G)$.

    For every  $u\in V(G)$ with $d(u)=4$, let $u$ give out  $1/5$ to all the vertices in $N(u)\cap A_1$, $3/23$ to all the vertices in $N(u)\cap A_2$ and $2/15$ to all the vertices in $N(u)\cap A_3$.

    For every  $w\in V(G)$ with $d(w)\ge 5$, let $w$ give out $1/5$ to all the vertices in $N(w)\cap A_1$, $8/23$ to all the vertices in $N(w)\cap A_2$ and $7/15$ to all the vertices in $N(w)\cap A_3$.

    The resulting value function is denoted as $f_1: V(G)\to \mathbb{R}$.

    \noindent{\bf Claim 7.}
        For every $v\in V(G)$, $f_1(v)\geq 83/23$.

    \noindent{\bf Proof of Claim 7.}
        If $v\in A_1$, then  $f_1(v)\geq 3+3\cdot 1/5=18/5>82/23$.
    If $v\in A_2$, then  $f_1(v)\geq 3+2\cdot 3/23+8/23=83/23$.
    If $v\in A_3$, then  $f_1(v)=3+2/15+7/15=18/5>83/23$.

    Let $v\in V_{ 4^+}$ and $d(v)=4$. If  $N(v)\cap A_1=\emptyset$, then $|N(v)\cap V_3|\le 3$ (see Figure 4).  Since $2/15>3/23$, we have $f_1(v)\geq 4-3\cdot 2/15=18/5>83/23$. If  $N(v)\cap A_1\neq \emptyset$, then $|N(v)\cap V_{ 4^+}|\ge 2$ (see Figure 4). So $d(v)\geq 4-2\cdot1/5=18/5>83/23$.

    Let $v\in V_{ 4^+}$ and $d(v)\ge 5$. If  $N(v)\cap (A_1\cup A_3)\neq \emptyset $, then $|N(v)\cap V_{ 4^+}|\ge 2$ (see Figure 4). Thus $f_1(v)\geq d(v)-7/15\cdot(d(v)-2)\geq 18/5$. If $N(v)\cap (A_1\cup A_3)=\emptyset$, then $|N(v)\cap V_{ 4^+}|\ge 1$ (see Figure 4). So $f_1(v)\geq d(v)-8/23(d(v)-1)\geq 83/23.$\q

    From Claim 7, we have that
    $$e(G)=\frac{1}{2}\sum_{v\in V(G)}d(v)=\frac{1}{2}\sum_{v\in V(G)}f_0(v)=\frac{1}{2}\sum_{v\in V(G)}f_1(v)\geq 83n/46.$$
    So we have $e(G)\geq \lc 83n/46\rc \geq \lc 7n/4\rc$ when $n\geq 7$.
Thus the end of the proof of Theorem \ref{thm:stronger result}.
\end{reprooff}
\section{A construction of an edge-pancyclic graph}\label{sec:section 3}
We need three steps to construct an edge-pancyclic graph $G$. In this section, let $[n]=\{1,2,\dots,n\}$ and $[a,b]=\{a,\ldots,b\}$ where $a,b\in [n]$ and $a\le b$.

 \noindent\textbf{Step 1.} Construct the graph $G_1$.

Let $s\geq 2$ be an integer and set $\ell =\lf\frac{s}{\ln s}\rf$. Then  $s^\ell \leq e^s<s^{\ell+1}$. Let $C= v_1v_2\ldots v_{s^\ell}v_1$ be a  cycle. The graph $G_1$ is obtained from $C$ by adding edges $v_{js}v_{js+s^i}$, where $j\in [s^{\ell-1}]$ and $i\in [\ell-1]$. We make a deal that when $s^\ell<k\leq 2s^\ell$, $v_k=v_{k-s^\ell}$, and when $-s^\ell<k\leq 0$, $v_k=v_{k+s^\ell}$. Denote $E_1=E(C)$ and $E_2=E(G_1)\setminus E_1$. Then $G_1$  has the following properties.

\begin{lemma}\label{lem: each edge in a cycle s^p}
    For each edge $e\in E(G_1)$ and each integer $p\in [\ell-1]$, there exists a cycle contained $e$ with length in $[s^p-s^{p-1},s^p+3]$. Furthermore, we can also require at most $3$ edges on the cycle to be in $E_2$.
\end{lemma}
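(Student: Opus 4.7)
The plan is to case-split on whether $e$ is a cycle edge (belonging to $E_1$) or a chord (belonging to $E_2$), and in the latter case to further distinguish between $i \le p$ and $i > p$, where $s^i$ is the ``length'' of the chord $e$. For a cycle edge $e = v_k v_{k+1} \in E_1$, let $v_{js}$ be the largest multiple-of-$s$ vertex at or before $v_k$ along $C$ (using the paper's cyclic index convention). The closed walk
\[
v_k,\, v_{k-1},\, \ldots,\, v_{js},\, v_{js+s^p},\, v_{js+s^p-1},\, \ldots,\, v_{k+1},\, v_k
\]
is a simple cycle of length $s^p + 1$, since the two cycle-arcs visit the disjoint index-ranges $[js,k]$ and $[k+1, js+s^p]$. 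It contains $e$, the single $E_2$-chord $v_{js}v_{js+s^p}$ (valid because $p \in [\ell-1]$), and cycle edges from $E_1$; hence only one edge of the cycle lies in $E_2$.

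For a chord edge $e = v_{j_0 s}v_{j_0 s + s^i} \in E_2$ with $i \le p$, the cases $i = p$ and $i < p$ are both direct. When $i = p$, the natural short cycle $v_{j_0 s},\, v_{j_0 s + s^p},\, v_{j_0 s + s^p - 1},\, \ldots,\, v_{j_0 s + 1},\, v_{j_0 s}$ has length $s^p + 1$ and contains only $e$ from $E_2$. When $i < p$, traverse $C$ forward from $v_{j_0 s + s^i}$ to $v_{j_0 s + s^p}$ using $s^p - s^i$ cycle edges and close via the chord $v_{j_0 s + s^p}v_{j_0 s}$; this yields a cycle of length $s^p - s^i + 2$, which lies in $[s^p - s^{p-1}, s^p + 3]$ and uses exactly two edges from $E_2$.

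The main obstacle is the case $i > p$: the endpoints of $e$ are $s^i > s^p$ apart on $C$, and one checks that any cycle through $e$ using only one additional chord has length at least $(s-1)s^{i-1} + 2$, which already exceeds $s^p + 3$ when $s \ge 3$. My plan is therefore a ``parallelogram'' construction using three $E_2$-edges and one short cycle-arc. After traversing $e$ to $v_{j_0 s + s^i}$, take the backward $s^p$-chord to $v_{j_0 s + s^i - s^p}$, then the backward $s^i$-chord to $v_{j_0 s - s^p}$, and finally return to $v_{j_0 s}$ along $C$ via the $s^p$ cycle edges. Each chord edge is a legitimate element of $E_2$ because its endpoints are multiples of $s$ and its length is a power $s^p$ or $s^i$ with $p, i \in [\ell-1]$. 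Vertex-distinctness follows from $s^i > s^p$ together with $s^p + 3 \le s^{\ell-1} + 3 < s^\ell$. The total length is exactly $s^p + 3$, which meets the upper endpoint of the target interval, and the cycle uses precisely three edges from $E_2$, completing the verification across all cases.
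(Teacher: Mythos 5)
Your proof is correct and follows essentially the same case split ($e\in E_1$; $e\in E_2$ with $i=p$, $i<p$, $i>p$) and the same constructions as the paper. The only difference is cosmetic: in the $i>p$ case you close the ``parallelogram'' backward via $v_{j_0 s + s^i - s^p}$ and $v_{j_0 s - s^p}$, while the paper closes it forward via $v_{j_0 s + s^p}$ and $v_{j_0 s + s^p + s^i}$; both give a cycle of length $s^p+3$ with three $E_2$-edges.
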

\begin{proof} Let $e\in E(G_1)$ and  $p\in [\ell-1]$.
    First, we consider the case $e\in E_1$. Notice that every edge in $E_1$ lies in the cycle $C_{s^p+1}=v_{js}v_{js+1}\ldots v_{js+s^p}v_{js}$ for some $j\in [s^{\ell-1}]$ and each $p\in [\ell-1]$. Moreover, $|E(C_{s^p+1})\cap E_2|=1$. Thus the lemma holds for every $e\in E_1$.

    Now we consider the case $e\in E_2$. By symmetry, it suffices to consider the edge $e=v_{js}v_{js+s^q}$, where $q\in [\ell-1]$. We will finish the proof by considering the relationship between $p$ and $q$.
    \begin{mycase}
        \case $p=q$.
        In this case, $e$ is contained in a cycle $v_{js}v_{js+1}\ldots v_{js+s^p}v_{js}$ of length $s^p+1$ and one edge comes from $E_2$ (see Figure 5).

        \case $p>q$. Then  $v_{js+s^p}v_{js} v_{is+s^q}v_{js+s^q+1}v_{js+s^q+2}\ldots v_{js+s^p}$ is a cycle contained $e$ of length $s^p-s^q+2$ with two edges in $E_2$ (see Figure 5).

        \case $p<q$. Then  $v_{js+s^q}v_{js}v_{js+1}v_{js+2}\ldots v_{js+s^p}v_{js+s^p+s^q}v_{js+s^q}$ is a cycle contained $e$ of length $s^p+3$ with three edges in $E_2$ (see Figure 5).
    \end{mycase}
    \begin{figure}[h]
        \centering
        \includegraphics[width=0.8\linewidth]{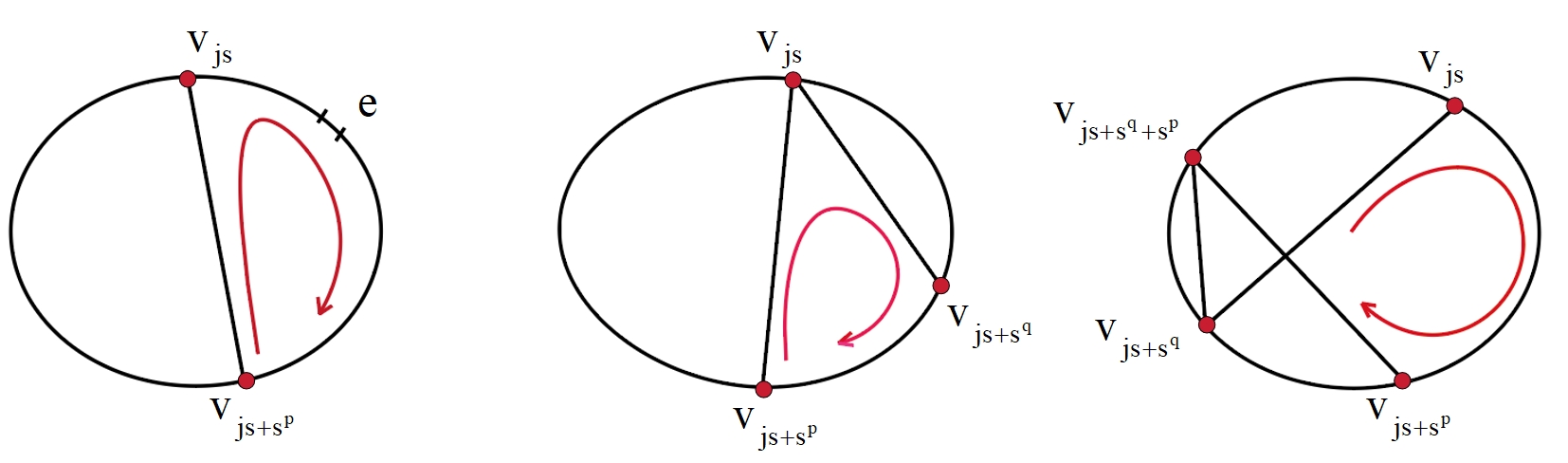}
        \caption{Three types of cycles in Lemma \ref{lem: each edge in a cycle s^p}}
        \label{fig:enter-label}
    \end{figure}
\end{proof}
\textbf{Step 2.} Replace every $e\in E_1$ with a graph.

Let $F_a$ be the fan graph of order $a$, which is obtained from a path $P_{a-1}$ by joining every vertex in $P_{a-1}$ to a new vertex. Let $A$ and $B$ be two disjoint union of $F_{50s}$ with $V(A)=\{w_1,\ldots,w_{50s-1},w\}$ and $V(B)=\{u_1,\ldots,u_{50s-1},u\}$ where $w$ and $u$ are the center vertices of $A$ and $B$ respectively.

Let $H(s)$ be the graph obtained from the union of $A$ and $B$ by adding edges $wu_1,uw_{50s-1}$ and $uw$ (see Figure 6).

\begin{figure}[t]
    \centering
    \includegraphics[width=0.9\linewidth]{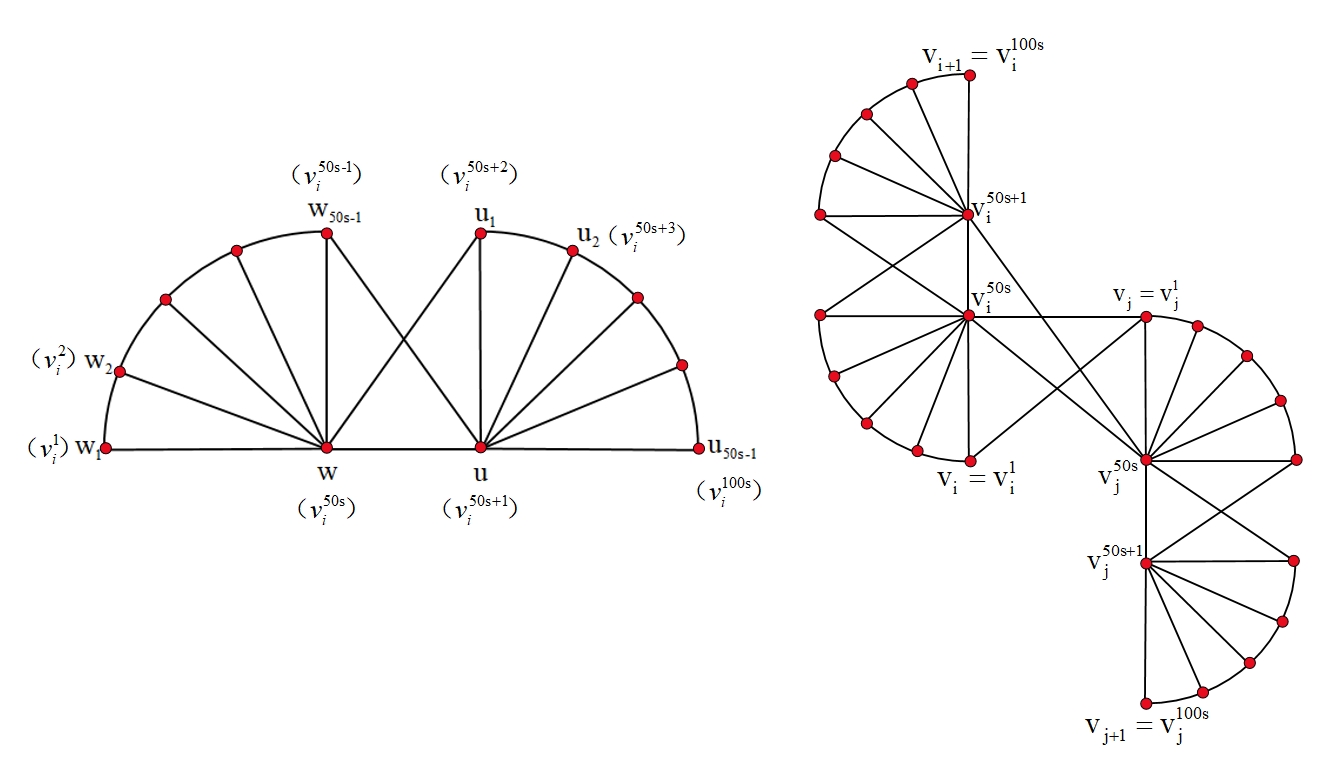}
    \caption{The graph $H(s)$ and the replacement of $G_1$}
    \label{fig:enter-label}
\end{figure}

Then for every edge $v_iv_{i+1}\in E_1$, we replace it by a copy of $H(s)$ with $v_i=w_1$ and $v_{i+1}=u_{50s+1}$. The vertex $w_j$ is renamed as $v_i^j$ for $j\in [50s-1]$, $w$ is renamed as $v_i^{50s}$ and $u$ is renamed as $v_i^{50s+1}$, while $u_t$ is renamed as $v_i^{50s+1+t}$ for $t\in [50s-1]$. Thus, we have $v_i=v_i^1$ and $v_{i+1}=v_i^{100s}$.
The new graph is denoted as $G_2$.

 \noindent\textbf{Step 3.} Replace every $e\in E_2$ with four edges.

For every $e=v_iv_j(=v_i^1v_j^1)\in E_2$ and $v_i$ appears before $v_j$ clockwise (we say $v_i$ appears before $v_j$ clockwise if the number of vertices in $\{v_i,v_{i+1},\dots,v_{j-1},v_j\}$ is at most $s^\ell/2$), we add the edges $v_i^{50s}v_j^1,v_i^{50s}v_j^{50s},v_i^{50s+1}v_j^{50s}$ to $G_2$ (see Figure 6). The new graph is denoted as $G$.

It is easy to calculate that $v(G)=(100s-1)s^\ell$ and $e(G)=2v(G)-|E_1|+4|E_2|$. Since $|E_1|=s^\ell$ and $|E_2|=(\ell-1)s^{\ell-1}\leq \frac{1}{8}|E(C)|$ when $s\geq e^8$, we have
$$2v(G)-e(G)\geq \frac{1}{2}|E(C)|=\frac{n}{2(100s-1)}\geq \frac{n}{200s}.$$
Since $s^\ell\leq e^s<s^{\ell+1}$ and $\ln n\geq \ln(99s\cdot s^\ell)\geq \ln 99+\ln(e^s)\geq s$, we have
$$e(G)\leq 2n-\frac{n}{200\ln n}.$$
To prove Theorem \ref{thm: upper bound}, it is left to prove $G$ is edge-pancyclic.

\begin{theorem}
    The graph $G$ is edge-pancyclic.
\end{theorem}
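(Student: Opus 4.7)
The plan is to partition $E(G)$ according to the construction and handle each class in two length regimes. The three classes are: (i) edges internal to a single copy of $H(s)$, i.e.\ path edges and spokes of the two fans and the three bridge edges $wu_1$, $uw_{50s-1}$, $uw$; (ii) the retained chord edges $v_i^1v_j^1\in E_2$; and (iii) the three replacement edges $v_i^{50s}v_j^{1},v_i^{50s}v_j^{50s},v_i^{50s+1}v_j^{50s}$ added for each chord. For each edge $e$ I need to exhibit a $C_k\ni e$ for every $3\le k\le n$. I split this range at $K:=100s$, handling $3\le k\le K$ by local arguments inside one or two copies of $H(s)$, and $K\le k\le n$ by lifting cycles from $G_1$ through the pancyclic fan structure.

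For the short range, the fan $F_{50s}$ is itself edge-pancyclic up to length $50s$: every path edge $w_iw_{i+1}$ sits in a triangle with $w$ and extends by one vertex at a time along the path, and every spoke $ww_i$ sits in triangles and extends similarly. So each edge of class (i) inside a fan already lies in $C_k$ for $3\le k\le 50s$, and by using the two bridge vertices $w,u$ and the second fan one can continue up to $k=100s=K$. The bridge edges and the three replacement edges of class (iii) all have an endpoint at $v_i^{50s}$ or $v_i^{50s+1}$ or at some $v_j^{1},v_j^{50s}$, each of which has many spokes into a fan, so small cycles through them are constructed the same way, by picking a short loop in an adjacent fan and closing it. The chord edges of class (ii) are handled symmetrically, using that both endpoints $v_i^1,v_j^1$ are attached to their own $H(s)$ copies, together with the replacement edges to $v_i^{50s},v_i^{50s+1},v_j^{50s}$ giving a triangle $v_i^1v_j^1v_j^{50s}v_i^{50s}$ and longer short cycles by detouring through a fan.

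For the long range I use Lemma~\ref{lem: each edge in a cycle s^p} together with a length-adjustment observation about $H(s)$: between the two ``anchor'' vertices $v_i=v_i^1$ and $v_{i+1}=v_i^{100s}$, the graph $H(s)$ admits a path of every integer length from $3$ (through $w,u$) up to $100s-1$ (a Hamilton path), obtained by choosing how far along each fan-path to go before using the spokes to the centers. Call this the \emph{slack} of a segment; one segment has slack $100s-4$. Given a target length $k$ with $K\le k\le n$, pick $p$ with $s^p\le k/(100s)\le s^{p+1}$ and, using Lemma~\ref{lem: each edge in a cycle s^p}, produce a cycle $C^\star$ of $G_1$ of length in $[s^p-s^{p-1},s^p+3]$ containing the ``projection'' of the edge $e$ to $G_1$, with at most three of its edges in $E_2$. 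Then expand each $E_1$-edge of $C^\star$ into a path of suitable length through its $H(s)$ copy, and each $E_2$-edge of $C^\star$ into one of the short paths provided by the three replacement edges (which themselves have a small range of possible lengths when one also traverses part of an adjacent fan). Summing slacks over the $\Theta(s^p)$ segments used shows the achievable total lengths form a contiguous interval covering $k$. For $k=n$ one uses a Hamilton cycle of $G_1$ (given by the original cycle $C$) and expands every segment into a Hamilton path of the corresponding $H(s)$.

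The main obstacle is the bookkeeping that guarantees every integer $k$ is hit exactly, not merely ``up to constants''. Two points need care: first, the transition at $k\approx K$, which is handled because a single $H(s)$ already covers all lengths up to $100s-1$ and one cycle through two adjacent copies reaches $2(100s-1)$, overlapping comfortably with the smallest cycle $C^\star$ can provide when $p=1$; second, that the slack from $E_2$-edge expansions is small (at most a constant per chord) so we must rely mainly on the slack of the $E_1$-segments, of which there are at least $s^p-s^{p-1}-3$ in $C^\star$. Since each contributes slack $100s-4$, the aggregate slack exceeds the gap between the minimum and maximum expansion of $C^\star$, ensuring surjectivity onto the integer interval and hence that $e$ lies on a $C_k$ for every $k$ in the long range. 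Combined with the local argument for $k\le K$, this shows $G$ is edge-pancyclic and completes the proof.
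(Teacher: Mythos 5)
Your proposal follows essentially the same strategy as the paper: partition $E(G)$ according to the construction steps, handle a short length range $[3,K]$ by local pancyclicity inside one or a few copies of $H(s)$, handle the long range by lifting a cycle of $G_1$ obtained from Lemma~\ref{lem: each edge in a cycle s^p} and exploiting the per-segment ``slack'' of $H(s)$-paths to hit every intermediate length, and finish by checking the regimes overlap. This matches the paper's Claims 8--12, including the observation that each $E_1$-segment admits a path of every length in $[3,100s-1]$ and that at most a bounded number of edges of $C^\star$ (the at-most-three $E_2$-chords plus the segment carrying $e$) must be handled specially.

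Two small caveats on constants rather than on the idea. First, your rule ``pick $p$ with $s^p\le k/(100s)\le s^{p+1}$'' is off by one: it yields $p=0$ for $k$ near $100s$, whereas $p=1$ is what actually produces cycles of those lengths (the achievable interval for $p$ is roughly $[F+3V,\,F+(100s-1)V]$ with $V\approx s^p-s^{p-1}$, not $[100s^{p+1},100s^{p+2}]$); the paper avoids this by pushing the local range up to $600s$ so that even with the conservative bound $F\le 5\cdot 100s$ the $p=1$ interval begins at $503s-6<600s$. Second, your transition at $K=100s$ is tighter than the paper's $600s$, so you need to be careful that the worst case of the fixed contribution $F$ (e.g.\ when $e$ is a middle fan edge, forcing a detour of length $\approx 25s$ through its segment) still lets the $p=1$ interval reach down to $100s$; the paper's looser split makes this automatic. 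Also a trivial slip: $v_i^1v_j^1v_j^{50s}v_i^{50s}$ is a $C_4$, not a triangle, though $v_i^1v_j^1v_i^{50s}$ does give the $C_3$. None of these affects the soundness of the approach; they are the ``bookkeeping'' you correctly flagged as the main thing left to nail down.
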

\begin{proof}
    We denote the edges of $G$ obtained from Step 2 and Step 3 by $E_3$ and  $E_4$, respectively.
Given $1\le i\le s^\ell$, let $\tau_i^1,\tau_i^2$ be two integers that are divisible by $s$, such that $\tau_i^1< i< \tau_i^2$ and $\tau_i^2-\tau_i^1=2s$. We have the following claims.

     \noindent{\bf Claim 8.}
        Every edge in $E_3$ lies in a cycle of length $k$, where $3\leq k\leq 600s$.

    \noindent{\bf Proof of Claim 8.}
        It is easy to check that every edge in $H(s)$ lies in a cycle of length $k$, $3\leq k\leq 50s$.

        For an edge $v_i^jv_i^{j+1}$, where $1\le i\le s^\ell$ and $1\leq j\leq 25s$, we have  $$C^\star :=v_i^1 \ldots v_i^{j-1}v_i^jv_i^{j+1}v_i^{50s}v_i^{50s+1}v_i^{100s}(=v_{i+1}^{1})v_{i+1}^{50s}v_{i+1}^{50s+1}v_{i+1}^{100s}\dots v_{\tau_i^2}^{50s}v_{\tau_i^2+s^{\ell-1}}^{50s}v_{\tau_i^2+s^{\ell-1}-s}^{50s}$$
        $$v_{\tau_i^2+s^{\ell-1}-2s}^{50s}v_{\tau_i^2+2s^{\ell-1}-2s}^{50s}\dots v_{\tau_i^2+s\cdot s^{\ell-1}-2s}^{50s}(=v_{\tau_i^1}^{50s})v_{\tau_i^1}^{50s+1}v_{\tau_i^1}^{100s}(=v_{\tau_i^1+1}^{1})v_{\tau_i^1+1}^{50s}v_{\tau_i^1+1}^{50s+1}v_{\tau_i^1+1}^{100s}\dots v_i^1$$
        is a cycle of length $j+7s+2$ (see Figure 8).

        Note that each $P_4=v_p^{1} v_p^{50s}  v_p^{50s+1} v_p^{100s}$ in $C^\star$ can be replaced by a path of each length between $3$ and $100s-1$ that starts from $v_p^{1}$ and ends with $v_p^{100s}$  (see Figure 8). And there are $2s-3$ copies of such $P_4$ in $C^\star$. Thus, every edge $v_i^jv_i^{j+1}$ for $1\leq j\leq 25s$ is contained in a cycle of length $k$ for $3\leq k\leq 600s$ (In fact, the above proof implies a wider range than $3\leq k\leq 600s$, but $600s$ is enough for this proof).

        When $ 25s\le j\le 50s-1$, we can replace the segment $v_i^1v_i^2\dots v_i^{j+1}v_i^{50s}v_i^{50s+1}$ in $C^\star$ with $v_i^1v_i^{50s}v_i^jv_i^{j+1}\\ \dots v_i^{50s-1}v_i^{50s+1}$ and with a similar replacement to prove that  every edge $v_i^jv_i^{j+1}$ for $25s\leq j\leq 50s$ is contained in a cycle of length $k$ for $3\leq k\leq 600s$. 

        By symmetry and those two cycles, under a similar replacement argument, it is easy to check that we can prove that each edge in $E_3$ can be contained in a cycle of length $k$ for $3\leq k\leq 600s$.

        Thus, every edge in $E_3$ is contained in a cycle of length $k$, $3\leq k\leq 600s$.\q
        \begin{figure}
            \centering
        \includegraphics[width=0.7\linewidth]{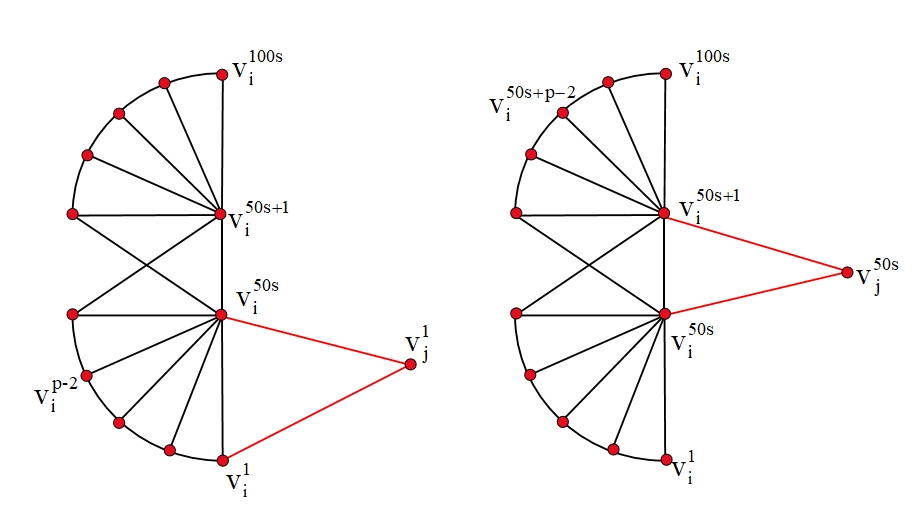}
            \caption{Every edge in $E_4$ lies in a cycle of each length in $[3,10s]$}
            \label{fig:enter-label}
        \end{figure}
\begin{figure}
            \centering
            \includegraphics[width=0.9\linewidth]{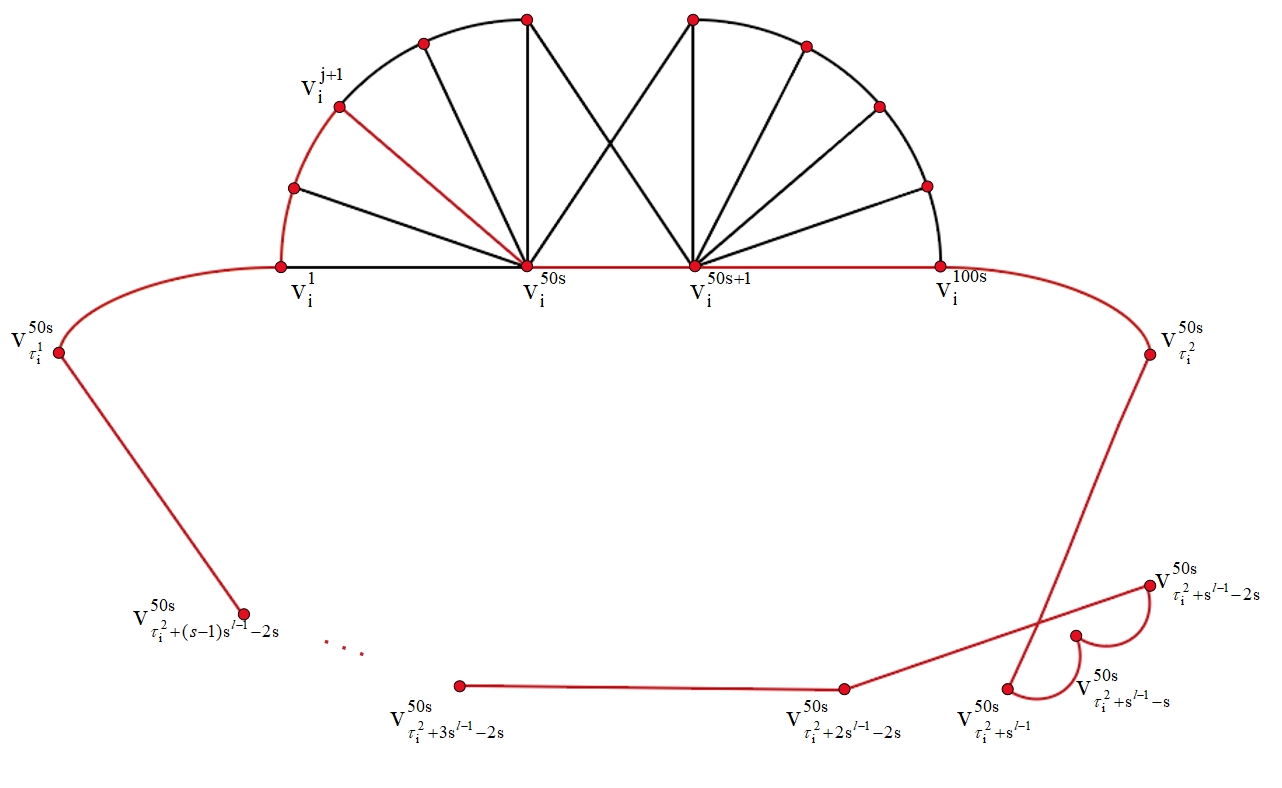}
            \caption{When $1\leq j\le 25s$, the edge $v_i^jv_i^{j+1}$ of $G$ lies in a cycle of each length in $[3,600s]$}
            \label{fig:enter-label}
        \end{figure}

   \noindent{\bf Claim 9.}
        Every edge in $E_4$ lies in a cycle of  length $k$, $3\leq k\leq 600s$.

   \noindent{\bf Proof of Claim 9.} Note that $$E_4=\cup_{i,j\in [s^\ell],v_iv_j\in E_2}\{v_i^{1}v_j^1,v_i^{50s}v_j^{1},v_i^{50s}v_j^{50s},v_i^{50s+1}v_j^{50s}\}.$$
        Consider the edges $v_i^{1}v_j^1,v_i^{50s}v_j^{1}\in E_4$, where $1\le i,j\le s^\ell$. For each $p\in [3,50s]$, $$v_j^{1}v_i^1v_i^2\ldots v_i^{p-2}v_i^{50s}v_j^{1}$$ is a cycle of length $p$ containing $v_i^{1}v_j^1$ and $v_i^{50s}v_j^{1}$ (see Figure 7).

        Consider the edges $v_i^{50s}v_j^{50s},v_i^{50s+1}v_j^{50s}\in E_4$, where $1\le i,j\le s^\ell$. For each $4\leq p\leq 50s$, $$v_j^{50s}v_i^{50s}v_i^{50s+2}\ldots v_i^{50s+p-2}v_i^{50s+1}v_j^{50s}$$ is a cycle  of length $p$ containing $v_i^{50s}v_j^{50s}$ and $v_i^{50s+1}v_j^{50s}$  (see Figure 7). And $v_j^{50s}v_i^{50s}v_i^{50s+1}v_j^{50s}$ is a $C_3$ containing both edges  (see Figure 7).

        Then we can use the similar cycle $C^\star$ as in Claim 8 and the similar replacement to prove every edge in $E_4$ is contained in a cycle of length $k$ if $j\not\in \{\tau_i^2,\tau_i^2+s^{\ell-1},\tau_i^2+s^{\ell-1}-s,\tau_i^2+s^{\ell-1}-2s,\tau_i^2+2s^{\ell-1}-2s,\dots,\tau^1_i\}$, where $3\leq k\leq 600s$.
        When $j\in \{\tau_i^2,\tau_i^2+s^{\ell-1},\tau_i^2+s^{\ell-1}-s,\tau_i^2+s^{\ell-1}-2s,\tau_i^2+2s^{\ell-1}-2s,\dots,v_{\tau^1_i}\}$, we can replace $\tau^2_i$ with $\hat{\tau}_i^2 :=\tau^2_i+s$ and add a vertex $v_{\hat{\tau}_i^2+s^{\ell-1}-3s}$ between $v_{\hat{\tau}_i^2+s^{\ell-1}-2s}$ and $v_{\hat{\tau}_i^2+2s^{\ell-1}-2s}$ in $C^\star$. Then we use a similar replacement to prove such edges in $E_4$ are contained in a cycle of length $k$, $3\leq k\leq 600s$.
        \q

    \noindent{\bf Claim 10.}
        Every edge in $E_3$ lies in a cycle of  length $k$, $ 3s^\ell+100s\leq k\leq v(G)$.

    \noindent{\bf Proof of Claim 10.}
    Notice that for every $3\leq p\leq 100s-1$, there is a path of length $p$ contained in the copy of $H(s)$ in $G$ connecting $v_{i}$ and $v_{i+1}$, where $i\in [s^\ell]$ and $v_{i},v_{i+1}\in V(G_1)$. Moreover, every edge in $E_3$ lies in a path of length $100s-1$  contained in the copy of $H(s)$ in $G$ connecting $v_j$ and $v_{j+1}$ for some $j\in [s^\ell]$, where $v_j,v_{j+1}\in V(G_1)$.
    Thus using the Hamilton cycle in $G_1$, we can find a cycle of each length $k$, $k\geq 3s^\ell+100s$ containing $e$ for every $e\in E_3$.\q

    \noindent{\bf Claim 11.}
        Every edge in $E_4$ lies in a cycle of each length $k$, $ 3s^\ell+200s\leq k\leq v(G)$.

    \noindent{\bf Proof of Claim 11.}
        Notice that for the edges $v_i^{50}v_j^1, v_i^{50s+1}v_j^{50s}\in E_4$,
        \begin{equation*}
            \begin{aligned}
            &v_i^{50s+1}v_i^{50s+2}\dots v_i^{100s}(=v_{i+1}^1)v_{i+1}^{50s}v_{i+1}^{50s+1}v_{i+1}^{100s}\dots v_j^1v_i^{50s}v_i^{50s-1}\dots v_i^1v_{i-1}^{50s+1}v_{i-1}^{50s}v_{i-1}^1\dots\\ &v_{j+1}^{1}(=v_{j}^{100s})v_{j}^{100s-1}\dots v_j^{50s+1}v_j^{50s-1}\dots v_j^{2}v_j^{50s}v_i^{50s+1}
            \end{aligned}
        \end{equation*}
        is a cycle of length $3s^\ell+200s-8$. Moreover, every $P_4=v_p^1v_p^{50s}v_p^{50s+1}v_p^{100s}$ can be replaced with $t $ edges, $3\leq t\leq 100s-1$. And there are $s^{\ell}-2$ copies of such $P_4$ in the cycle.
        Thus $v_i^{50}v_j^1$ and $v_i^{50s+1}v_j^{50s}$ lies in a cycle of each length $k$, $k\geq 3s^\ell +200s.$
        We can similarly find the cycles containing $v_i^1v_j^1,v_i^{50s}v_j^{50s}\in E_4$  (see Figure 9).
        \begin{figure}
            \centering
            \includegraphics[width=1\linewidth]{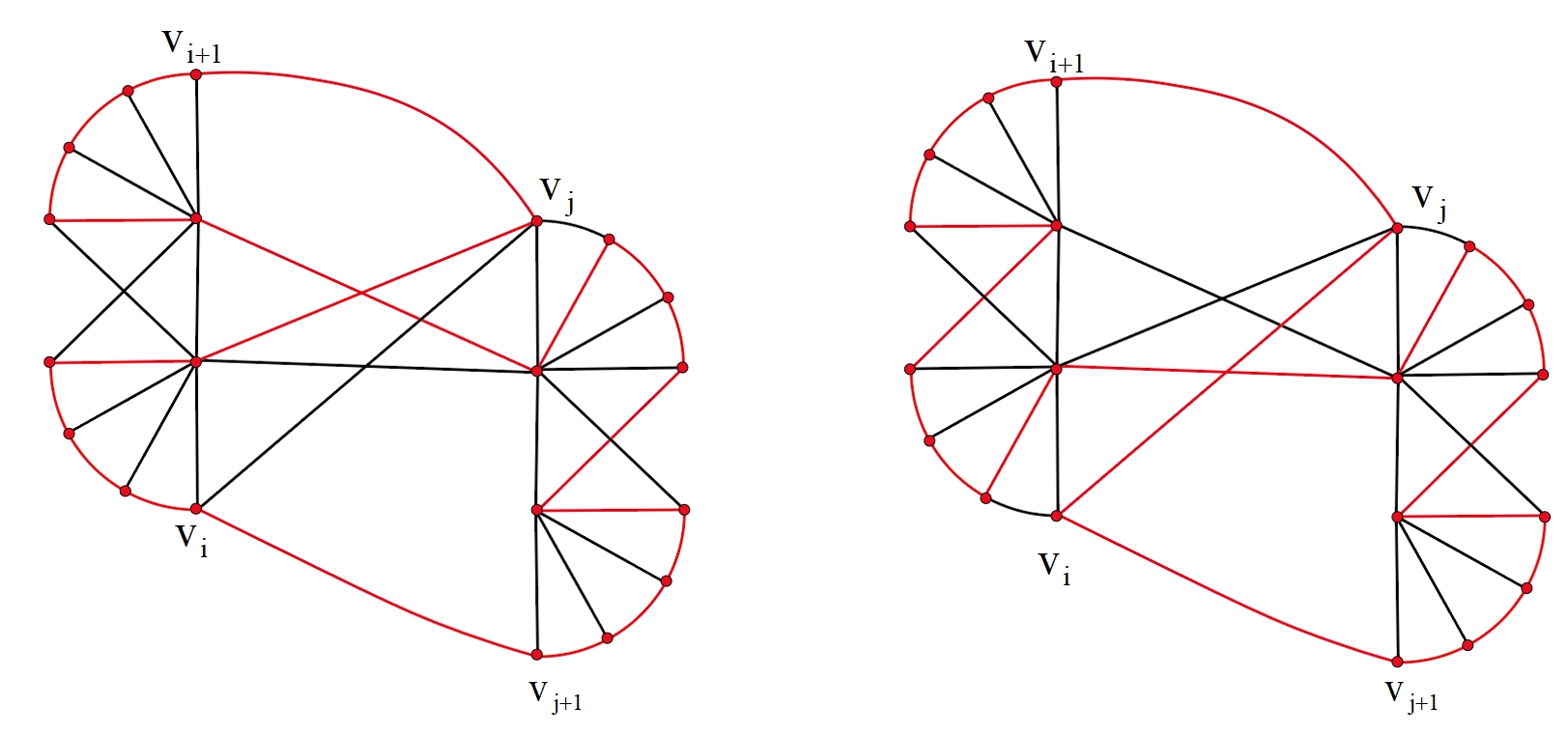}
            \caption{Every edge in $E_4$ lies in a cycle of each length in $[3s^\ell+200s,v(G)]$}
            \label{fig: every E_4}
        \end{figure}\q

    \noindent{\bf Claim 12.}
        Each edge of $G$ lies in a cycle of each length in $[3s^p+500s-6,100s^{p+1}+100s^p-500s+14]$ for each $p\in [\ell-1]$.

    \noindent{\bf Proof of Claim 12.}
        For each $p\in [\ell-1]$, by Lemma \ref{lem: each edge in a cycle s^p}, each edge in $G_1$ is contained in a cycle $C_0$ of some length in $[s^p-s^{p-1},s^p+3]$. Also, at least $s^p-s^{p-1}-3$ edges come from $E_1$.

        Then we can lift the cycle $C_0$ to the graph $G$. It is  clear that every edge in $E_1$ is replaced by a path with length of at least $3$ and at most $100s$ in $G$. There are many ways to do this replacement. Except for at most $5$ edges in the $C_0$ (at most $3$ edges in $E_2$ and at most $2$ edges incident to the given edge), we can replace each of the other edges with a path having at least $3$ at most $100s$ edges. Then we have cycles of each length in $[(s^p+3-5)\cdot3+5\cdot 100s,(s^p-s^{p-1}-5)\cdot 100s+5\cdot 3]$.\q

     When $s\geq e^8$, we have $3s^{p+1}+500s-6\leq 100s^{p+1}+100s^p-500s+14$ for $p\in[\ell-1]$. Thus by Claim 12, each edge of $G$ lies in a cycle of  length $k$ for any $k\in [503s-6,100s^\ell+100s^{\ell-1}-500s+14]$.
     Note that $3s^\ell+200s\leq 100s^{\ell}+100s^{\ell-1}-500s+14$. By Claims 10 and  11, each edge of $G$ lies in a cycle of  length $k$ for any $k\in [503s-6,v(G)]$. Together with Claims 8 and 9, $G$ is edge-pancyclic.
\end{proof}

\section*{Acknowledgement}
% Y. Yang is supported by the Fundamental Research Funds for the Central University~(Grant 500423306) in China.
Many thanks to the anonymous referees for their many helpful comments and suggestions, which have considerably improved the presentation of the paper.
This work is supported by the National Natural Science Foundation of China~(Grant 12171272 \& 12426603).


\begin{thebibliography}{1}
\expandafter\ifx\csname urlstyle\endcsname\relax
  \providecommand{\doi}[1]{doi:\discretionary{}{}{}#1}\else
  \providecommand{\doi}{doi:\discretionary{}{}{}\begingroup
  \urlstyle{rm}\Url}\fi

\bibitem{broersma1997note}
H.~J. Broersma.
\newblock A note on the minimum size of a vertex pancyclic graph.
\newblock \emph{Discrete Mathematics}, 164(1-3):29--32, 1997.

\bibitem{hendry1990extending}
G.~R. Hendry.
\newblock Extending cycles in graphs.
\newblock \emph{Discrete Mathematics}, 85(1):59--72, 1990.

\bibitem{hobbs1976square}
A.~M. Hobbs.
\newblock The square of a block is vertex pancyclic.
\newblock \emph{Journal of Combinatorial Theory, Series B}, 20(1):1--4, 1976.

\bibitem{jesso2010hamiltonicity}
A.~T. Jesso.
\newblock {The Hamiltonicity of block-intersection graphs of balanced
  incomplete block designs}.
\newblock \emph{Ph.D. thesis, Memorial University of Newfoundland}, 2010.

\bibitem{li2024minimum}
C.~Li, F.~Liu, and X.~Zhan.
\newblock The minimum size and maximum diameter of an edge-pancyclic graph of a
  given order.
\newblock \emph{Discrete Math}, 348(11):114576, 2025.

\bibitem{randerath2002vertex}
B.~Randerath, I.~Schiermeyer, M.~Tewes, and L.~Volkmann.
\newblock Vertex pancyclic graphs.
\newblock \emph{Discrete Applied Mathematics}, 120(1-3):219--237, 2002.

\bibitem{lei2022extended}
L.~Lei, W.~Xiong, Y.~Xie, et al.
\newblock On the extended Clark-Wormold Hamiltonian-like index problem.
\newblock \emph{Discrete Mathematics},  345(4): 112745, 2022.

\end{thebibliography}
\end{document}